\newtheorem{theorem}{Theorem}
\newtheorem{proposition}[theorem]{Proposition}
\newtheorem{lemma}[theorem]{Lemma}
\newtheorem{corollary}[theorem]{Corollary}\newtheorem{remark}[theorem]{Remark}
\newcommand{\R}{{\mathbb R}}
\renewcommand{\S}{{\mathbb S}}
\newcommand{\be}[1]{\begin{equation}\label{#1}}
\newcommand{\ee}{\end{equation}}
\renewcommand{\(}{\left(}
\renewcommand{\)}{\right)}
\newcommand{\iSd}[1]{\int_{\S^d}{#1}\,d\mu}
\newcommand{\nrmSd}[2]{\|{#1}\|_{\mathrm L^{#2}(\S^d)}}
\newcommand{\bignrmSd}[2]{\big\|{#1}\big\|_{\mathrm L^{#2}(\S^d)}}
\newcommand{\iRd}[1]{\int_{\R^d}{#1}\,dx}
\newcommand{\cb}[1]{\langle{#1}\rangle}
\newcommand{\nc}{\normalcolor}
\begin{document}
\journalname{} %\journalname{Advanced Nonlinear Studies}
\author*[1]{Jean Dolbeault}
\author[2]{Maria J.~Esteban}
\runningauthor{J.~Dolbeault \& M.J.~Esteban}
\affil[1]{CEREMADE (CNRS UMR n$^\circ$ 7534),\newline PSL university, Universit\'e Paris-Dauphine, Place de Lattre de Tassigny, F-75775 Paris 16}
\affil[2]{CEREMADE (CNRS UMR n$^\circ$ 7534),\newline PSL university, Universit\'e Paris-Dauphine, Place de Lattre de Tassigny, F-75775 Paris 16}
\title{Improved interpolation inequalities and stability}
%\title{Interpolation inequalities on the sphere}
\runningtitle{Improved interpolation inequalities and stability}
\abstract{For exponents in the subcritical range, we revisit some optimal interpolation inequalities on the sphere with \emph{carré du champ} methods and use the remainder terms to produce improved inequalities. The method provides us with lower estimates of the optimal constants in the symmetry breaking range and stability estimates for the optimal functions. Some of these results can be reformulated in the Euclidean space using the stereographic projection.}
\keywords{Interpolation, Gagliardo-Nirenberg inequalities, Sobolev inequality, logarithmic Sobolev inequality, Poincar\'e inequality, heat equation, nonlinear diffusion}
\classification[MSC 2010]{26D10, 46E35, 58E35}
\dedication{L.~V\'eron on the occasion of his 70$^{th}$ anniversary.}
\maketitle

%%%%%%%%%%%%%%%%%%%%%%%%%%%%%%%%%%%%%%%%%%%%%%%%%%%%%%%%%%%%%%%%%%%%%%%%%%
%%%%%%%%%%%%%%%%%%%%%%%%%%%%%%%%%%%%%%%%%%%%%%%%%%%%%%%%%%%%%%%%%%%%%%%%%%
\section{Introduction}\label{Sec:Intro}

Let us consider the sphere $\S^d$ endowed with the uniform probability measure $d\mu$. We shall define by $\nrmSd uq=\(\kern4pt\iSd{|u|^q}\)^{1/q}$ the corresponding norm, denote by $2^*$ the critical exponent in dimension $d\ge3$, that is, $2^*=2\,d/(d-2)$ and adopt the convention that $2^*=\infty$ if $d=1$ or $d=2$. The subcritical \emph{Gagliardo-Nirenberg inequalities} on the sphere of dimension $d$ can be stated as follows: for $p\in(2,2^*)$,
\be{Ineq:GNSgen1}
\frac{p-2}d\,\nrmSd{\nabla u}2^2+\lambda\,\nrmSd u2^2\ge\mu(\lambda)\nrmSd up^2\quad\forall\,u\in\mathrm H^1(\S^d,d\mu)\,,
\ee
where the function $\lambda\mapsto\mu(\lambda)$ is positive, concave, increasing and such that $\mu(\lambda)=\lambda$ for $\lambda\in(0,1]$ and $\mu(\lambda)<\lambda$ if $\lambda>1$: see~\cite{DolEsLa-APDE2014}. Moreover, if $\lambda\in(0,1]$, the only extremals of~\eqref{Ineq:GNSgen1} are the constant functions. In the limit case $p=2^*$, with \hbox{$d\ge3$}, the inequality also holds with optimal constant $\mu(\lambda)=\min\{\lambda,1\}$ and it is simply the \emph{Sobolev inequality} on $\S^d$ when $\lambda=1$.

In the case $p\in[1,2)$, as shown in~\cite{DolEsLa-APDE2014}, there are similar inequalities where the roles of $p$ and $2$ are exchanged: for $p\in[1,2)$,
\be{Ineq:GNSgen2}
\frac{2-p}d\,\nrmSd{\nabla u}2^2+\mu\nrmSd up^2\ge\lambda(\mu)\,\nrmSd u2^2\quad\forall\,u\in\mathrm H^1(\S^d,d\mu)\,.
\ee
Here the function $\mu\mapsto\lambda(\mu)$ is positive, concave, increasing and such that
$\lambda(\mu)=\mu$ for $\mu\in(0,1]$, and $\lambda(\mu)<\mu$ if $\mu>1$. If $\mu\in(0,1]$, the only extremals of~\eqref{Ineq:GNSgen2} are the constant functions. In the limit case $p=1$, the inequality with $\lambda=1$ is the \emph{Poincar\'e inequality}.

With $\lambda=1$, Inequalities~\eqref{Ineq:GNSgen1} and~\eqref{Ineq:GNSgen2} can be rewritten as
\be{Ineq:GNS}
\nrmSd{\nabla u}2^2\ge\frac d{p-2}\(\nrmSd up^2-\nrmSd u2^2\)\quad\forall\,u\in\mathrm H^1(\S^d,d\mu)
\ee
for any $p\in[1,2)\cup(2,2^*)$ if $d=1$, $2$, and for any $p\in[1,2)\cup(2,2^*]$ if $d\ge3$. Since $d\mu$ is a probability measure, we know from H\" older's inequality that the right-hand side of~\eqref{Ineq:GNS} is nonnegative independently of the sign of $(p-2)$. We will call~\eqref{Ineq:GNS} the \emph{Gagliardo-Nirenberg-Sobolev interpolation inequality}. In the case $p>2$, it is usually attributed to W.~Beckner~\cite{MR1230930} but can also be found in~\cite[Corollary~6.1]{BV-V}. However an earlier version corresponding to the range $p\in[1,2)\cap(2,2^\#)$ was established in the context of continuous Markov processes and linear diffusion operators by D.~Bakry and M.~Emery in~\cite{Bakry-Emery85,MR808640}, using the \emph{carr\'e du champ} method, where~$2^\#$ is the \emph{Bakry-Emery exponent} defined as
\[
2^\#=\frac{2\,d^2+1}{(d-1)^2}
\]
for any $d\ge2$, and where we shall adopt the convention that $2^\#=+\infty$ if $d=1$. Notice that the case $p=2^\#$ is also covered in~\cite{MR808640,Bakry-Emery85} if $d\ge2$. By taking the limit in~\eqref{Ineq:GNS} as $p\to2$, we obtain the \emph{logarithmic Sobolev inequality} on $\S^d$,
\be{Ineq:logSob}
\nrmSd{\nabla u}2^2\ge\frac d2\,\iSd{|u|^2\,\log\(\frac{|u|^2}{\nrmSd u2^2}\)}\quad\forall\,u\in\mathrm H^1(\S^d,d\mu)\setminus\{0\}\,.
\ee
For brevity, we shall consider it as the ``$p=2$ case'' of the Gagliardo-Nirenberg-Sobolev interpolation inequality. Inequality~\eqref{Ineq:logSob} was known from earlier works, see for instance~\cite{MR674060}.

Various proofs of~\eqref{Ineq:GNS} have been published. By Schwarz foliated symmetrization, it is possible to reduce~\eqref{Ineq:GNS} to inequalities based on the ultraspherical operator, which simplifies a lot the computations: see~\cite{DEKL2012,DEKL,1504} and references therein for earlier results on the ultraspherical operator. In this paper, we rely on the \emph{carr\'e du champ} method of D.~Bakry and M.~Emery and refer to~\cite{MR3155209} for a general overview of this technique. We also revisit some \emph{improved Gagliardo-Nirenberg-Sobolev inequalities} that can be written~as
\be{improved}
\nrmSd{\nabla u}2^2\ge d\,\varphi\(\frac{\nrmSd up^2-\nrmSd u2^2}{(p-2)\,\nrmSd up^2}\)\,\nrmSd up^2\quad\forall\,u\in\mathrm H^1(\S^d)\,.
\ee
Here $\varphi$ is a nonnegative convex function such that $\varphi(0)=0$ and $\varphi'(0)=1$. As a consequence, $\varphi(s)\ge s$ and we recover~\eqref{Ineq:GNS} if $\varphi(s)\equiv s$, but in improved inequalities we will have $\varphi(s)>s$ for all $s\neq0$. Such improvements have been obtained in~\cite{MR2381156,DEKL,Dolbeault20141338,1504}. Here we write down more precise estimates and draw some interesting consequences of~\eqref{improved}, such as lower estimates for the best constants in~\eqref{Ineq:GNSgen1} and~\eqref{Ineq:GNSgen2} or improved weighted Gagliardo-Nirenberg inequalities in the Euclidean space $\R^d$.

The improved inequality~\eqref{improved}, with $\varphi(s)>s$ for $s\neq0$, can also be considered as a stability result for~\eqref{Ineq:GNS} in the sense that it can also be rewritten as
\[
\nrmSd{\nabla u}2^2-\frac d{p-2}\(\nrmSd up^2-\nrmSd u2^2\)\ge d\,\psi\(\frac{\nrmSd up^2-\nrmSd u2^2}{(p-2)\,\nrmSd up^2}
\)\,\nrmSd up^2
\]
for any $u\in\mathrm H^1(\S^d)$, with $\psi(s)=\varphi(s)-s>0$ for $s\neq0$. Here the right-hand side of the inequality is a measure of the distance to the optimal functions, which are the constant functions: see Appendix~\ref{App:Distance} for details.

%%%%%%%%%%%%%%%%%%%%%%%%%%%%%%%%%%%%%%%%%%%%%%%%%%%%%%%%%%%%%%%%%%%%%%%%%%
%%%%%%%%%%%%%%%%%%%%%%%%%%%%%%%%%%%%%%%%%%%%%%%%%%%%%%%%%%%%%%%%%%%%%%%%%%
\section{Main results}\label{Sec:Main}

Our first result goes as follows. Let
\be{gamma1}
\gamma=\(\frac{d-1}{d+2}\)^2\,(p-1)\,(2^\#-p)\quad\mbox{if}\quad d\ge2\,,\quad\gamma=\frac{p-1}3\quad\mbox{if}\quad d=1\,,
\ee
so that $\gamma=2-p$ with $1\le p\le2^\#$ means that
\[\label{gammaLim}
\begin{array}{ll}
d=1\quad\mbox{and}\quad p=7/4=p_*(1)\,,\\
d>1\quad\mbox{and}\quad p=p_*(d)\,
\end{array}
\]
occurs, where
\[
p_*(d)=\frac{3+d+2\,d^2-2\,\sqrt{4\,d+4\,d^2+d^3}}{(d-1)^2}
\]
for any $d\ge2$. Notice that for all $d\ge1$, $1<p_*(d)<2$ and $\lim_{d\to+\infty}p_*(d)=2$. For any admissible $s\ge0$, \emph{i.e.}, for any $s\in\big[0,(p-2)^{-1}\big)$ if $p>2$ and any $s\ge0$ if $p\in[1,2)$, let
\be{phifunction}
\begin{array}{ll}
\varphi(s)=\frac{1-(p-2)\,s-\(1-(p-2)\,s\)^{-\frac\gamma{p-2}}}{2-p-\gamma}\quad&\mbox{if}\quad\gamma\neq2-p\,,\\[4pt]
\varphi(s)=\frac1{2-p}\(1+(2-p)\,s\)\log\(1+(2-p)\,s\)\quad&\mbox{if}\quad\gamma=2-p\,.
\end{array}
\ee
Written in terms of $\nrmSd u2^2$ and $\nrmSd up^2$, we shall prove in Section~\ref{Sec:Heat} that~\eqref{improved} holds with $\varphi$ given by~\eqref{phifunction} and gives rise to a following, new interpolation inequality.
%---------------------------------------------------------------------
\begin{theorem}\label{Thm:Interp} Let $d\ge1$, assume that
\be{Range:p}
p\neq2\,,\quad\mbox{and}\quad1\le p\le2^\#\quad\mbox{if}\quad d\ge2\,,\quad p\ge1\quad\mbox{if}\quad d=1
\ee
and let $\gamma$ be given by~\eqref{gamma1}. Then we have
\be{improvedineq}
\nrmSd{\nabla u}2^2\ge\frac d{2-p-\gamma}\(\nrmSd u2^2-\nrmSd up^{2-\frac{2\,\gamma}{2-p}}\,\nrmSd u2^{\frac{2\,\gamma}{2-p}}\)\quad\forall\,u\in\mathrm H^1(\S^d)
\ee
if $\gamma\neq2-p$, and
\be{improvedineqLog}
\nrmSd{\nabla u}2^2\ge\frac{2\,d}{p-2}\,\nrmSd u2^2\,\log\(\frac{\nrmSd u2^2}{\nrmSd up^2}\)\quad\forall\,u\in\mathrm H^1(\S^d)
\ee
if $\gamma=2-p$.\end{theorem}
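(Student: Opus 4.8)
The plan is to obtain Theorem~\ref{Thm:Interp} by rewriting the improved inequality~\eqref{improved} in closed form, once it is known that~\eqref{improved} holds with the explicit function $\varphi$ of~\eqref{phifunction}. The analytic substance lies in this last fact, which I would establish by the \emph{carr\'e du champ} method: running the nonlinear diffusion flow attached to the Laplace--Beltrami operator on $\S^d$, I would differentiate the natural deficit functional along the flow and invoke the Bochner--Lichnerowicz identity, which on $\S^d$ reads $\Gamma_2(f)=\|\mathrm{Hess}\,f-\tfrac1d\,(\Delta f)\,g\|^2+\tfrac1d\,(\Delta f)^2+(d-1)\,|\nabla f|^2$. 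Retaining the nonnegative traceless-Hessian remainder instead of discarding it is exactly what upgrades the plain inequality~\eqref{Ineq:GNS} to~\eqref{improved}; integrating the resulting differential inequality then produces precisely the $\varphi$ of~\eqref{phifunction}, the power-law branch coming from a linear ODE for the deficit and the logarithmic branch $\gamma=2-p$ arising as the resonant case where that ODE degenerates. The admissible range~\eqref{Range:p}, capped by the Bakry--Emery exponent $2^\#$, is the range in which the remainder carries the right sign. Since this is carried out in Section~\ref{Sec:Heat}, for the present statement I take~\eqref{improved} with~\eqref{phifunction} as given.

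It then remains to rewrite~\eqref{improved}. I set
\[
s=\frac{\nrmSd up^2-\nrmSd u2^2}{(p-2)\,\nrmSd up^2},
\]
which is admissible because H\"older's inequality on the probability space $(\S^d,d\mu)$ gives $\nrmSd u2\le\nrmSd up$ for $p>2$ and $\nrmSd up\le\nrmSd u2$ for $p<2$, so that $s\ge0$ and $(p-2)\,s<1$ in both cases. Everything is driven by the single identity
\[
1-(p-2)\,s=\frac{\nrmSd u2^2}{\nrmSd up^2}.
\]
Inserting this into the first line of~\eqref{phifunction}, multiplying by $d\,\nrmSd up^2$ as dictated by~\eqref{improved}, and using $-\gamma/(p-2)=\gamma/(2-p)$ to turn $\big(\nrmSd u2^2/\nrmSd up^2\big)^{-\gamma/(p-2)}$ into $\nrmSd u2^{2\gamma/(2-p)}\,\nrmSd up^{-2\gamma/(2-p)}$, the prefactor $\nrmSd up^2$ redistributes across the two terms: it cancels in the first, giving $\nrmSd u2^2$, and combines in the second, giving $\nrmSd up^{2-2\gamma/(2-p)}\,\nrmSd u2^{2\gamma/(2-p)}$. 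This yields the right-hand side of~\eqref{improvedineq} verbatim. The only care needed here is to track the exponents exactly.

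For the degenerate value $\gamma=2-p$ I would argue by continuity in $\gamma$. Writing $\varepsilon=2-p-\gamma$, the bracket in~\eqref{improvedineq} and the prefactor $(2-p-\gamma)^{-1}$ together form a $0/0$ limit, and the first-order expansion $\big(\nrmSd up^2/\nrmSd u2^2\big)^{\varepsilon/(2-p)}=1+\tfrac{\varepsilon}{2-p}\,\log\big(\nrmSd up^2/\nrmSd u2^2\big)+o(\varepsilon)$ collapses it to the entropy term $\nrmSd u2^2\,\log\big(\nrmSd u2^2/\nrmSd up^2\big)$, which reproduces~\eqref{improvedineqLog}; equivalently one substitutes the identity above directly into the second line of~\eqref{phifunction}. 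The main obstacle is therefore not this closing manipulation but the step I assumed at the outset: proving~\eqref{improved} with the sharp $\varphi$, i.e.\ controlling the sign of the \emph{carr\'e du champ} remainder uniformly across the whole admissible range of $p$ and integrating the deficit's differential inequality without sacrificing the improvement, is where the real difficulty lies.
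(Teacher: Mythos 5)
Your closing algebra is correct, and it is exactly the paper's (one-line) final step: with $s$ as you define it, $1-(p-2)\,s=\nrmSd u2^2/\nrmSd up^2$, and substituting this into \eqref{phifunction} turns \eqref{improved} into \eqref{improvedineq}. But for this theorem the paper's proof essentially \emph{is} the step you take as given, namely that \eqref{improved} holds with $\varphi$ from \eqref{phifunction} (Lemmas~\ref{Lem:ODE}--\ref{Lem:ODEvarphi}, via the flow \eqref{HeatFlow}, i.e.\ the heat flow for $u^p$, not the porous-medium flow of Section~\ref{Sec:NonlinearFlows}), and the mechanism you sketch for that step is wrong. The improvement is \emph{not} obtained by ``retaining the nonnegative traceless-Hessian remainder'': in the identity behind Lemma~\ref{Lem:ODE} the term $-\,\frac d{d-1}\iSd{\big\|\mathrm L u-(p-1)\,\frac{d-1}{d+2}\,\mathrm M u\big\|^2}$ is discarded, exactly as in the proof of the unimproved inequality \eqref{Ineq:GNS}. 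What is kept is the \emph{other} remainder, $-\,\gamma\iSd{|\nabla u|^4/u^2}$ with $\gamma$ given by \eqref{gamma1}, and the decisive point, taken from~\cite{MR2152502} and~\cite{DEKL}, is the Cauchy--Schwarz bound $\mathsf i^2=\big(\kern2pt\iSd{u\cdot\frac{|\nabla u|^2}u}\big)^2\le\iSd{u^2}\,\iSd{\frac{|\nabla u|^4}{u^2}}$, which under the normalization $\nrmSd up=1$ converts that remainder into $\gamma\,|\mathsf e'|^2/\big(1-(p-2)\,\mathsf e\big)$ and yields the closed differential inequality \eqref{BEode}; comparison with the solution of \eqref{ode} then gives $\mathsf i\ge d\,\varphi(\mathsf e)$. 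Without this Cauchy--Schwarz step, $\iSd{|\nabla u|^4/u^2}$ is not a function of $(\mathsf e,\mathsf e')$, no ODE in the entropy closes, and no $\varphi$ can be extracted; retaining the Hessian term instead produces nothing of the sort. Note also that the Bochner identity you quote is the linear, $p$-independent one: the coefficient $\gamma$ and the restriction $p\le2^\#$ in \eqref{Range:p} only emerge after completing a square that mixes $\mathrm L u$ with the tensor $\mathrm M u$, which is where the actual work of Lemma~\ref{Lem:ODE} lies.

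Separately, your treatment of the degenerate case overstates what comes out. Carried out correctly, the limit $\gamma\to2-p$ (equivalently, direct substitution into the second line of \eqref{phifunction}) produces the constant $\frac d{2-p}$ in front of $\nrmSd u2^2\,\log\big(\nrmSd u2^2/\nrmSd up^2\big)$, not the constant $\frac{2\,d}{p-2}$ appearing in \eqref{improvedineqLog}. Since $\gamma=2-p$ forces $p=p_*(d)<2$, the printed constant is negative while the logarithm is nonnegative (by H\"older, $\nrmSd up\le\nrmSd u2$), so \eqref{improvedineqLog} as stated is trivially true and cannot be sharp; the value $\frac d{2-p}$ is the one consistent with the sharpness test $u=1+\varepsilon\,v$, $-\Delta v=d\,v$, so \eqref{improvedineqLog} as printed carries a misprint. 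A careful write-up should state the constant your computation actually yields rather than claim it reproduces \eqref{improvedineqLog} verbatim.
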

%---------------------------------------------------------------------
In Inequalities~\eqref{improvedineq} and~\eqref{improvedineqLog}, the equality case is achieved by constant functions only and the constants $\frac d{2-p-\gamma}$ in~\eqref{improvedineq} and $\frac{2\,d}{p-2}$ in~\eqref{improvedineqLog} are sharp as can be shown by testing the inequality with $u=1+\varepsilon\,v$ with $v$ such that $-\Delta v=d\,v$ in the limit as $\varepsilon\to0$.

\medskip Now, let us come back to~\eqref{Ineq:GNSgen1} and~\eqref{Ineq:GNSgen2}. We deduce from Theorem~\ref{Thm:Interp} the following estimates of the best constants in \eqref{Ineq:GNSgen1} and \eqref{Ineq:GNSgen2}: see Fig.~\ref{F1} for an illustration.
%---------------------------------------------------------------------
\begin{theorem}\label{Thm:Lambda-Mu} Let $d\ge1$, $\gamma$ be given by~\eqref{gamma1} and assume that $p$ is in the range~\eqref{Range:p}.\\[4pt]
\emph{(i)} If $1\le p<2$, $p\neq p_*(d)$, then
\[
\lambda(\mu)\ge\frac{2-p-\gamma\,\mu^{1-\frac{2-p}\gamma}}{2-p-\gamma}\quad\forall\,\mu\ge1\,.
\]
\emph{(ii)} If $2<p<2^\#$, then
\[
\mu(\lambda)\ge\(\lambda+\frac{p-2}\gamma\,(\lambda-1)\)^\frac\gamma{\gamma+p-2}\quad\forall\,\lambda\ge1\,.
\]
\end{theorem}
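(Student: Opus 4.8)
The plan is to deduce both estimates from the improved inequality~\eqref{improvedineq} of Theorem~\ref{Thm:Interp} by a single one-variable optimisation, exploiting the fact that every term is $2$-homogeneous in $u$ and that Hölder's inequality on the probability space $(\S^d,d\mu)$ pins down the range of the one remaining free variable. For part~(ii), $p>2$ forces $2-p<0<\gamma$, so $\gamma\neq2-p$ and~\eqref{improvedineq} is available. I would multiply~\eqref{improvedineq} by $(p-2)/d>0$, add $\lambda\,\nrmSd u2^2$, and normalise by $\nrmSd up^2=1$; Hölder's inequality then gives $X:=\nrmSd u2^2\in(0,1]$. Writing $m:=\frac{p-2}{2-p-\gamma}<0$, the left-hand side of~\eqref{Ineq:GNSgen1} is bounded below by $f(X):=(m+\lambda)\,X-m\,X^{\gamma/(2-p)}$, so that the optimal constant, characterised by $\mu(\lambda)=\inf_u\big(\frac{p-2}d\,\nrmSd{\nabla u}2^2+\lambda\,\nrmSd u2^2\big)\big/\nrmSd up^2$, satisfies $\mu(\lambda)\ge\min_{(0,1]}f$.

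Next I would minimise $f$. Since $\gamma/(2-p)<0$, a direct computation shows $f''>0$ on $(0,\infty)$ and $f(X)\to+\infty$ as $X\to0^+$, so $f$ is strictly convex with a unique critical point. Solving $f'(X)=0$ gives $X_*=\big(\lambda+\frac{p-2}\gamma\,(\lambda-1)\big)^{-\frac{p-2}{\gamma+p-2}}$, which lies in $(0,1]$ exactly because $\lambda\ge1$. Eliminating the power term via the critical-point relation $m\,\frac\gamma{2-p}\,X_*^{\gamma/(2-p)-1}=m+\lambda$ collapses $f(X_*)$ to $\big(\lambda+\frac{p-2}\gamma\,(\lambda-1)\big)^{\gamma/(\gamma+p-2)}$, the asserted estimate for $\mu(\lambda)$.

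Part~(i) is entirely parallel. Here $p\neq p_*(d)$ ensures $\gamma\neq2-p$, so~\eqref{improvedineq} again applies. I would multiply it by $(2-p)/d>0$, add $\mu\,\nrmSd up^2$, and now normalise by $\nrmSd u2^2=1$, so that Hölder gives $Y:=\nrmSd up^2\in(0,1]$. With $n:=\frac{2-p}{2-p-\gamma}$ and $\delta:=1-\frac\gamma{2-p}$ this produces $h(Y):=n\,(1-Y^\delta)+\mu\,Y$, which is again strictly convex because $\delta-1=-\gamma/(2-p)<0$. Its minimiser is $Y_*=\mu^{-(2-p)/\gamma}\in(0,1]$, where $\mu\ge1$ is precisely what forces $Y_*\le1$, and the same elimination yields $h(Y_*)=\frac{2-p-\gamma\,\mu^{1-(2-p)/\gamma}}{2-p-\gamma}$, the claimed lower bound for $\lambda(\mu)$.

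The only genuine difficulty is bookkeeping. One must track the signs of $2-p$, $\gamma$ and $2-p-\gamma$—the last of which changes sign across $\gamma=2-p$ in part~(i)—in order to check that convexity and the direction of the bound hold uniformly in both sub-cases, and the clean evaluation of $f(X_*)$ and $h(Y_*)$ rests on the identities $\frac{p-2}{2-p-\gamma}\cdot\frac\gamma{2-p}=\frac\gamma{\gamma+p-2}$ and $n\,\delta=1$. The hypotheses $\lambda\ge1$ and $\mu\ge1$ enter solely to place the unconstrained minimiser inside $(0,1]$; at $\lambda=1$ (resp.\ $\mu=1$) the minimiser sits at the endpoint $X_*=1$ (resp.\ $Y_*=1$) and both formulas return the known boundary values $\mu(1)=1$ and $\lambda(1)=1$.
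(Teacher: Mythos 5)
Your proposal is correct and follows essentially the same route as the paper: both deduce the bounds from the improved inequality~\eqref{improvedineq} by exploiting homogeneity and H\"older's inequality on the probability space, thereby reducing the estimate of the optimal constant to an explicit one-variable convex minimization over the ratio $\nrmSd up^2/\nrmSd u2^2$, whose minimum value is exactly the stated formula. The only difference is presentational: the paper treats part~(i) by splitting into the sub-cases $\gamma>2-p$ and $\gamma<2-p$ with separate parametrizations of the exponent, while your identity $n\,\delta=1$ lets you handle both sub-cases uniformly.
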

%---------------------------------------------------------------------

Our third result has to do with stability for inequalities in the Euclidean space~$\R^d$ with $d\ge2$. For all $x\in\R^d$, let us define $\cb x:=\sqrt{1+|x|^2}$ and recall that $\big|\S^d\big|=2\,\pi^\frac{d+1}2/\Gamma\(\frac{d+1}2\)$. Using the stereographic projection of $\S^d$ onto $\R^d$ (see Appendix~\ref{App:Stereographic}), Inequality~\eqref{Ineq:GNS} can be written as a weighted interpolation inequality in $\R^d$:
\[
\iRd{|\nabla v|^2}+\frac{d\,\delta(p)}{p-2}\iRd{\frac{|v|^2}{\cb x^4}}\ge\mathcal C_{d,p}\(\iRd{\frac{|v|^p}{\cb x^{\delta(p)}}}\)^\frac2p\;\mbox{with}\;\mathcal C_{d,p}=\frac{2^\frac{\delta(p)}p\,d\,\big|\S^d\big|^{1-\frac2p}}{p-2}
\]
where
\[
\delta(p)=2\,d-p\,(d-2)\,.
\]
Notice that $\delta(2^*)=0$ for any $d\ge3$, so that the inequality is the Sobolev inequality with sharp constant if $p=2^*$. However, for any $p\in[1,2)\cup(2,2^*]$ and \hbox{$d\ge3$}, equality is obtained with $v_\star(x)=\cb x^{2-d}$ and this function is, up to an arbitrary multiplicative constant, the only one to realize the equality case if $p<2^*$. Equality is achieved by $v_\star=1$ in dimension $d=2$ for any $p\in[1,2)\cup(2,+\infty)$. Let us notice that $\nabla v_\star$ is not in $\mathrm L^2(\R^d)$ if $d=1$. Using the improved version~\eqref{improvedineq} of the inequality, we obtain as in Theorem~\ref{Thm:Interp} the following stability result.
%---------------------------------------------------------------------
\begin{theorem}\label{Thm:Stability} Let $d\ge2$ and assume that $p\in(2,2^\#)$. Then
\begin{multline*}
\iRd{|\nabla v|^2}+\frac{d\,\delta(p)}{p-2}\iRd{\frac{|v|^2}{\cb x^4}}-\mathcal C_{d,p}\(\iRd{\frac{|v|^p}{\cb x^{\delta(p)}}}\)^{2/p}\\
\ge\frac\gamma{p-2}\,\frac{\mathcal C_{d,p}}2\,\frac{\left[\(\iRd{\frac{|v|^p}{\cb x^{\delta(p)}}}\)^{2/p}-2^{2-\frac{\delta(p)}p}\,\big|\S^d\big|^{\frac2p-1}\iRd{\frac{|v|^2}{\cb x^4}}\right]^2}{\(\iRd{\frac{|v|^p}{\cb x^{\delta(p)}}}\)^{2/p}}
\end{multline*}
for any $v\in\mathrm L^2\big(\R^d,\cb x^{-4}\,dx\big)$ such that $\nabla v\in\mathrm L^2(\R^d,dx)$.\end{theorem}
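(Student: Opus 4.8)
The plan is to obtain the Euclidean estimate as the stereographic image of a \emph{quadratic} stability inequality on $\S^d$, which itself follows from the improved inequality~\eqref{improved} of Theorem~\ref{Thm:Interp} by a second-order control of the function $\varphi$. Throughout we work in the regime $p\in(2,2^\#)$, so that $p-2>0$ and, by~\eqref{gamma1}, $\gamma=\big(\tfrac{d-1}{d+2}\big)^2(p-1)(2^\#-p)>0$. In particular $\gamma\neq2-p$, so the relevant branch of~\eqref{phifunction} is the first one, and~\eqref{improved} holds with that explicit $\varphi$.

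First I would record the sharp quadratic lower bound on the remainder $\psi(s)=\varphi(s)-s$ appearing in the stability reformulation of~\eqref{improved}. Differentiating the first line of~\eqref{phifunction} twice gives the closed form
\[
\varphi''(s)=\gamma\,\big(1-(p-2)\,s\big)^{-\frac\gamma{p-2}-2}\,.
\]
For $p>2$ and $s$ in the admissible range $\big[0,(p-2)^{-1}\big)$ one has $1-(p-2)\,s\in(0,1]$, while $\gamma>0$ makes the exponent $-\tfrac\gamma{p-2}-2$ negative; hence $\varphi''(s)\ge\gamma$ on the whole interval. Since $\psi(0)=\psi'(0)=0$ and $\psi''=\varphi''\ge\gamma$, integrating twice yields $\psi(s)\ge\tfrac\gamma2\,s^2$ for every admissible $s\ge0$. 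Inserting this into the stability form of~\eqref{improved} with $s=\big(\nrmSd up^2-\nrmSd u2^2\big)\big/\big((p-2)\,\nrmSd up^2\big)$ produces the quadratic stability inequality on the sphere,
\[
\nrmSd{\nabla u}2^2-\frac d{p-2}\(\nrmSd up^2-\nrmSd u2^2\)\ge\frac{d\,\gamma}{2\,(p-2)^2}\,\frac{\(\nrmSd up^2-\nrmSd u2^2\)^2}{\nrmSd up^2}\,.
\]

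The final step is to push this through the stereographic projection of Appendix~\ref{App:Stereographic}. Because the $\lambda=1$ sphere inequality and the weighted $\R^d$ inequality preceding the theorem are one and the same under this correspondence, the deficit on the left is transported as an exact identity to the Euclidean deficit $\iRd{|\nabla v|^2}+\tfrac{d\,\delta(p)}{p-2}\iRd{\tfrac{|v|^2}{\cb x^4}}-\mathcal C_{d,p}\big(\iRd{\tfrac{|v|^p}{\cb x^{\delta(p)}}}\big)^{2/p}$, while the two norms convert by the dictionary $\nrmSd up^2\mapsto2^{\delta(p)/p}\,\big|\S^d\big|^{1-2/p}\big(\iRd{\tfrac{|v|^p}{\cb x^{\delta(p)}}}\big)^{2/p}$ and $\nrmSd u2^2\mapsto4\iRd{\tfrac{|v|^2}{\cb x^4}}$. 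Substituting these into the right-hand side above, the difference $\nrmSd up^2-\nrmSd u2^2$ becomes $2^{\delta(p)/p}\big|\S^d\big|^{1-2/p}$ times the bracket in the theorem, the identity $2^{\delta(p)/p}\big|\S^d\big|^{1-2/p}\cdot2^{2-\delta(p)/p}\big|\S^d\big|^{2/p-1}=4$ being what guarantees $\nrmSd u2^2\mapsto4\iRd{\cb x^{-4}|v|^2}$; collecting the constants, the prefactor $\tfrac{d\gamma}{2(p-2)^2}$ multiplied by the single surviving factor $2^{\delta(p)/p}\big|\S^d\big|^{1-2/p}$ from the $\nrmSd up^2$ in the quotient equals exactly $\tfrac{\gamma}{p-2}\,\tfrac{\mathcal C_{d,p}}2$, since $\mathcal C_{d,p}=2^{\delta(p)/p}d\,\big|\S^d\big|^{1-2/p}/(p-2)$. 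This reproduces the claimed remainder.

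The only genuinely delicate point is the \emph{global} validity of $\psi(s)\ge\tfrac\gamma2\,s^2$ rather than its mere infinitesimal version at $s=0$: this is exactly where the restriction $p<2^\#$ enters, through the positivity of $\gamma$, and where the monotonicity of $\varphi''$ on the admissible interval is used. The remaining work—the constant bookkeeping in the stereographic transfer—is routine but must be carried out with care, both to confirm that the deficit is transported as a genuine equality and to pair the weighted $\R^d$ integrals with the correct powers of $2$ and $\big|\S^d\big|$.
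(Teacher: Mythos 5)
Your proof is correct and takes essentially the same route as the paper's: the paper likewise notes that $\psi''(s)\ge\gamma\,\big(1-(p-2)\,s\big)^{\frac\gamma{2-p}-2}\ge\gamma$ for $\varphi$ given by~\eqref{phifunction}, deduces the quadratic stability inequality on $\S^d$, and concludes by stereographic projection. Your write-up simply makes explicit the computation of $\varphi''$ and the constant bookkeeping in the transfer to $\R^d$, which the paper leaves to the reader.
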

%---------------------------------------------------------------------
Again, the right-hand side of the inequality is a measure of the distance to $v_\star$. The proof is elementary. With $\varphi$ given by~\eqref{phifunction} and $\psi(s)=\varphi(s)-s$, we notice that
\[
\psi''(s)\ge\gamma\,\big(1-(p-2)\,s\big)^{\frac\gamma{2-p}-2}
\]
for any admissible $s\ge0$. With $1=\nrmSd up^2\ge\nrmSd u2^2=1-(p-2)\,s$ and $\frac\gamma{2-p}-2<0$, we know that $\psi''(s)\ge\gamma$. As a consequence, we have
\[
\nrmSd{\nabla u}2^2-\frac d{p-2}\(\nrmSd up^2-\nrmSd u2^2\)\ge\frac{\gamma\,d}{2\,(p-2)^2}\,\frac{\(\nrmSd up^2-\nrmSd u2^2\)^2}{\nrmSd up^2}\,.
\]
The result of Theorem~\ref{Thm:Stability} follows by applying the stereographic projection. A sharper result valid also if $p\in[1,2)$ will be given in Proposition~\ref{Prop:Stability}.

\medskip As noticed in~\cite[Theorem~2.2]{1504}, in the Bakry-Emery range~\eqref{Range:p}, we obtain an improvement if we assume an orthogonality condition on the sphere. Let us recall the result, which is independent of what we have obtained so far. Let $\mathrm H^1_+(\S^d,d\mu)$ denote the set of the a.e.~nonnegative functions in $\mathrm H^1(\S^d,d\mu)$ and define
\[\label{LambdaStar}
\Lambda^\star(p)=\inf\frac{\nrmSd{\nabla u}2^2}{\nrmSd{u-1}2^2}
\]
where the infimum is taken on the set of the functions $u\in\mathrm H^1_+(\S^d,d\mu)$ such that $\iSd u=1$ and $\iSd{x\,|u|^p}=0$. Then for any $p\in(2,2^\#)$, we have
\[\label{Ineq:GNSimproved}
\nrmSd{\nabla u}2^2\ge\frac1{p-2}\(d+\frac{(d-1)^2}{d\,(d+2)}\(2^\#-p\)\(\Lambda^\star(p)-d\)\)\(\nrmSd up^2-\nrmSd u2^2\)
\]
for any function $u\in\mathrm H^1(\S^d,d\mu)$ such that $\iSd{x_i\,|u|^p}=0$ with $i=1,\,2,\ldots d$. We know from~\cite{1504} that $\Lambda^\star(p)>d$ but the value is not explicit except for the limit case $p=2$. In this case, the inequality becomes a logarithmic Sobolev inequality, which has been stated in~\cite[Proposition~5.4]{1504}. Using the stereographic projection, we obtain new inequalities on $\R^d$ which are as follows.
%---------------------------------------------------------------------
\begin{theorem}\label{Thm:AFST} Let $d\ge2$ and assume that $p\in(2,2^\#)$. Then
\begin{multline*}
\iRd{|\nabla v|^2}+\frac{d\,\delta(p)}{p-2}\iRd{\frac{|v|^2}{\cb x^4}}-\mathcal C_{d,p}\(\iRd{\frac{|v|^p}{\cb x^{\delta(p)}}}\)^{2/p}\\
\ge
\frac{(d-1)^2}{d\,(d+2)}\,\frac{2^\#-p}{p-2}\(\Lambda^\star(p)-d\)\,\left[2^\frac{\delta(p)}p\,\big|\S^d\big|^{1-\frac2p}\(\iRd{\frac{|v|^p}{\cb x^{\delta(p)}}}\)^{2/p}-\,4\iRd{\frac{|v|^2}{\cb x^4}}\right]
\end{multline*}
for any function $v$ in the space $\left\{v\in\mathrm L^2\big(\R^d,\cb x^{-4}\,dx\big)\,:\,\nabla v\in\mathrm L^2(\R^d,dx)\right\}$ such that
\[
\iRd{\frac{x}{\cb x^4}\,|v|^2}=0\quad\mbox{and}\quad\iRd{\frac{|x|^2}{\cb x^4}\,|v|^2}=\iRd{\frac{|x|^2}{\cb x^4}\,|v_\star|^2}\,.
\]
Under the same conditions on $v$, we also have
\[
\iRd{|\nabla v|^2}\ge d\,(d-2)\iRd{\frac{|v|^2}{\cb x^4}}+\,\frac\lambda2\iRd{\frac{|v|^2}{\cb x^4}\,\log\(\frac{\(\frac12\,\cb x^2\)^{d-2}\,|v|^2}{4\,\big|\S^d\big|^{-1}\iRd{\frac{|v|^2}{\cb x^4}}}\)}
\]
with $\displaystyle\lambda=d+\frac 2d\,\frac{4\,d-1}{2\,(d+3)+\sqrt{2\,(d+3)\,(2\,d+3)}}$.
\end{theorem}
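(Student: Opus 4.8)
The plan is to obtain both inequalities by transporting, through the stereographic projection of Appendix~\ref{App:Stereographic}, the two orthogonality-improved inequalities recalled just above on $\S^d$; as for Theorem~\ref{Thm:Stability}, no new estimate has to be proved on the sphere itself. First I would recast the improved inequality in stability form by subtracting~\eqref{Ineq:GNS}: for every $u\in\mathrm H^1(\S^d,d\mu)$ with $\iSd{x_i\,|u|^p}=0$, $i=1,\dots,d$,
\[
\nrmSd{\nabla u}2^2-\frac d{p-2}\left(\nrmSd up^2-\nrmSd u2^2\right)\ge\frac{(d-1)^2}{d\,(d+2)}\,\frac{2^\#-p}{p-2}\,\left(\Lambda^\star(p)-d\right)\left(\nrmSd up^2-\nrmSd u2^2\right).
\]
The left-hand side is exactly the quantity whose Euclidean form was computed in the discussion preceding Theorem~\ref{Thm:Stability}, so it remains only to carry the remainder factor $\nrmSd up^2-\nrmSd u2^2$ through the same change of variables.

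Next I would apply the projection dictionary. Writing $u=\cb x^{d-2}\,v$ and $d\mu=2^d\,\big|\S^d\big|^{-1}\,\cb x^{-2d}\,dx$, up to a single multiplicative constant that factors out of the whole inequality one has the correspondences $\nrmSd up^2\leftrightarrow2^{\delta(p)/p}\,\big|\S^d\big|^{1-2/p}\,\big(\iRd{\cb x^{-\delta(p)}\,|v|^p}\big)^{2/p}$ and $\nrmSd u2^2\leftrightarrow4\,\iRd{\cb x^{-4}\,|v|^2}$, while the Dirichlet energy contributes $\iRd{|\nabla v|^2}$ together with the lower-order weighted term $\cb x^{-4}$, exactly as in the weighted inequality stated before Theorem~\ref{Thm:Stability}. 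Hence the left-hand side above becomes the bracket in the statement, and $\nrmSd up^2-\nrmSd u2^2$ becomes $2^{\delta(p)/p}\big|\S^d\big|^{1-2/p}\big(\iRd{\cb x^{-\delta(p)}\,|v|^p}\big)^{2/p}-4\,\iRd{\cb x^{-4}\,|v|^2}$, which is the factor multiplying the constant on the right; the constant $\frac{(d-1)^2}{d(d+2)}\frac{2^\#-p}{p-2}(\Lambda^\star(p)-d)$ is unchanged. The $d$ first-harmonic constraints $\iSd{x_i\,|u|^p}=0$ together with the anchoring of the remaining coordinate at its value on the image $v_\star=\cb x^{2-d}$ of the constants should then reduce to the vector condition $\iRd{\cb x^{-4}\,x\,|v|^2}=0$ and the scalar condition $\iRd{\cb x^{-4}\,|x|^2\,|v|^2}=\iRd{\cb x^{-4}\,|x|^2\,|v_\star|^2}$ of the statement, which completes the first inequality.

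For the logarithmic inequality I would let $p\to2$ in the same improved inequality on $\S^d$. Using $\nrmSd up^2-\nrmSd u2^2=\tfrac{p-2}2\,\iSd{|u|^2\,\log(|u|^2/\nrmSd u2^2)}+o(p-2)$ and $2^\#-2=(4d-1)/(d-1)^2$, the improvement survives in the limit as an improved logarithmic Sobolev inequality on the sphere whose constant is governed by $\Lambda^\star(2)$, namely the statement recorded as Proposition~5.4 in~\cite{1504}. Since $\Lambda^\star(2)$ can be computed explicitly, the associated eigenvalue problem leading to a quadratic equation and hence to the square root, one obtains precisely the constant $\lambda$. Projecting this inequality, the factor $d\,\delta(p)/(p-2)$, which diverges as $p\to2$, is replaced by the finite conformal term $d\,(d-2)\iRd{\cb x^{-4}\,|v|^2}$, and the sphere entropy produces the displayed logarithmic integral, under the same constraints on $v$.

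I expect the main obstacle to be the transport of the constraints rather than of the norms. Whereas the norms map essentially term by term, the spherical coordinate functions acquire the conformal weight $\cb x^{-2}$, so that the first-harmonic moments $\iSd{x_i\,|u|^p}$ do not go over to single moments of $v$; one must check carefully that, together with the normalization of the remaining coordinate, they are equivalent to the two displayed conditions on $v$. The only other nonroutine point is the explicit evaluation of $\Lambda^\star(2)$ that fixes $\lambda$ in the logarithmic case.
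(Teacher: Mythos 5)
Your route is exactly the paper's own: the entire proof given in Section~\ref{Sec:Heat} is the single sentence that Theorem~\ref{Thm:AFST} is a straightforward consequence of \cite[Theorem~2.2 and Proposition~5.4]{1504} via the stereographic projection, and your bookkeeping of the norms is correct. Indeed, with $u=\big(\tfrac12\,\cb x^2\big)^{\frac{d-2}2}v$, Appendix~\ref{App:Stereographic} gives
\[
\big|\S^d\big|\,\nrmSd u2^2=4\iRd{\frac{|v|^2}{\cb x^4}}\,,\qquad
\big|\S^d\big|\,\nrmSd{\nabla u}2^2=\iRd{|\nabla v|^2}-\,d\,(d-2)\iRd{\frac{|v|^2}{\cb x^4}}\,,
\]
together with $\big|\S^d\big|\,\nrmSd up^2=2^{\frac{\delta(p)}p}\,\big|\S^d\big|^{1-\frac2p}\big(\iRd{|v|^p\,\cb x^{-\delta(p)}}\big)^{2/p}$ and $\frac{4\,d}{p-2}-d\,(d-2)=\frac{d\,\delta(p)}{p-2}$, which reproduces both sides of the first inequality; your $p\to2$ expansion and the determination of $\lambda$ from the explicit value of $\Lambda^\star(2)$ are likewise what the authors intend.

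However, the step you defer --- the equivalence of the transported orthogonality conditions with the two displayed conditions on $v$ --- is not a routine check: it is the only nontrivial step, and it does not go through as stated. Writing $y_1,\dots,y_{d+1}$ for the ambient coordinates of $\S^d\subset\R^{d+1}$, the projection gives $y_i=2\,x_i/\cb x^2$ for $i\le d$ and $y_{d+1}=1-2/\cb x^2$, hence
\[
\iSd{y_i\,|u|^p}=c_{d,p}\iRd{\frac{x_i\,|v|^p}{\cb x^{2+\delta(p)}}}\quad(1\le i\le d)\,,\qquad
\iSd{y_{d+1}\,|u|^p}=c_{d,p}'\iRd{\frac{\big(|x|^2-1\big)\,|v|^p}{\cb x^{2+\delta(p)}}}
\]
for positive constants $c_{d,p}$, $c_{d,p}'$. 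The constraints of \cite[Theorem~2.2]{1504} therefore transport to moments of $|v|^p$ with weight $\cb x^{-2-\delta(p)}$, while the conditions in the statement bear on $|v|^2$ with weight $\cb x^{-4}$; pulled back to the sphere, the latter read $\iSd{\frac{y_i}{1-y_{d+1}}\,|u|^2}=0$, which for $p\neq2$ is not equivalent to $\iSd{y_i\,|u|^p}=0$. A telltale sign of the mismatch is that the displayed scalar condition is a mere normalization, vacuous by homogeneity (both sides of both inequalities are $2$-homogeneous in $v$), whereas the transported condition $\iRd{(|x|^2-1)\,|v|^p\,\cb x^{-2-\delta(p)}}=0$ is scale-invariant and genuinely restricts $v$. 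So what your argument, honestly completed, proves is the displayed inequality under the transported $|v|^p$-moment conditions; obtaining it under the conditions as printed would require an argument that neither you nor the paper supplies. (The paper's one-sentence proof glosses over exactly this point; a literal transport also produces the coefficient $2\,\lambda$, rather than the printed $\tfrac\lambda2$, in front of the entropy in the logarithmic inequality.) Your proposal is thus faithful to the paper's proof, but the constraint correspondence you flag as the ``main obstacle'' is a genuine gap, not a verification to be left to the reader.
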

%---------------------------------------------------------------------
Notice that the right-hand side of each of the two inequalities is proportional to the corresponding entropy and not to the square of the entropy as in Theorem~\ref{Thm:Stability}. This result is a counterpart for $p\in(2,2^\#)$, with a \emph{quantitative} constant, of the result of G.~Bianchi and H.~Egnell in~\cite{MR1124290} for the critical exponent $p=2^*$. See Remark~\ref{Rem:Bianchi-Egnell}. The constant $\Lambda^\star(p)$ can be estimated explicitly in the limit case as $p=2$: see~\cite[Proposition 5.4]{1504} for further details.

\medskip So far, all results have been limited to the Bakry-Emery range and rely on heat flow estimates on the sphere. However, using nonlinear flows as in~\cite{1504}, improvements and stability results can also be achieved when $p\in[2^\#,2^*)$. This will be the topic of Section~\ref{Sec:NonlinearFlows} while all results of Section~\ref{Sec:Main} are proved in Section~\ref{Sec:Heat} using the heat flow and the \emph{carr\'e du champ} method on the sphere.

%%%%%%%%%%%%%%%%%%%%%%%%%%%%%%%%%%%%%%%%%%%%%%%%%%%%%%%%%%%%%%%%%%%%%%%%%%
%%%%%%%%%%%%%%%%%%%%%%%%%%%%%%%%%%%%%%%%%%%%%%%%%%%%%%%%%%%%%%%%%%%%%%%%%%
\section{Heat flow and \emph{carr\'e du champ} method}\label{Sec:Heat}

In this section, our goal is to prove that~\eqref{improved} holds with $\varphi$ given by~\eqref{phifunction}.

\medskip In its simplest version, the \emph{carr\'e du champ} method goes as follows. We define the \emph{entropy} and the \emph{Fisher information} respectively by
\[
\mathsf e:=\frac 1{p-2}\(\nrmSd up^2-\nrmSd u2^2\)\quad\mbox{and}\quad\mathsf i:=\nrmSd{\nabla u}2^2\,.
\]
Then we shall assume that these quantities are driven by the flow such that $u^p$ is evolved by the heat equation, that is, we shall assume that $u>0$ solves
\be{HeatFlow}
\frac{\partial u}{\partial t}=\Delta u+(p-1)\,\frac{|\nabla u|^2}u
\ee
where $\Delta$ denotes the Laplace-Beltrami operator on $\S^d$. In the next result, $'$ denotes a~$t$~derivative.
%---------------------------------------------------------------------
\begin{lemma}\label{Lem:ODE} Let $d\ge1$, $\gamma$ be given by~\eqref{gamma1} and assume that $p$ is in the range~\eqref{Range:p}. With the above notations, $\mathsf e$ solves
\be{BEode}
\mathsf e''+2\,d\,\mathsf e'-\frac{\gamma\,|\mathsf e'|^2}{1-(p-2)\,\mathsf e}\ge0\,.
\ee\end{lemma}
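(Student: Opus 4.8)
The plan is to run the \emph{carré du champ} method on the heat flow \eqref{HeatFlow} and track the time derivatives of the entropy $\mathsf e$ and Fisher information $\mathsf i$. First I would compute $\mathsf e'$ along the flow. Since $u^p$ solves the heat equation, one has $\frac{d}{dt}\iSd{u^p}=0$, so the $\mathrm L^p$ norm is conserved; this is precisely why the flow was chosen. The derivative of $\nrmSd u2^2=\big(\iSd{u^2}\big)$ is computed by differentiating under the integral and using \eqref{HeatFlow}, integrating by parts to produce a Fisher-type information term. I expect to find the classical identity $\mathsf e'=-\mathsf i$ (up to the correct normalization dictated by the definitions of $\mathsf e$ and $\mathsf i$), so that the entropy dissipation equals the Fisher information. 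This is the standard first step and should be routine, though I must be careful with the $(p-2)^{-1}$ prefactor in $\mathsf e$ and the sign conventions.

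The substantive step is the computation of $\mathsf i'=\frac{d}{dt}\nrmSd{\nabla u}2^2$, equivalently the second derivative $\mathsf e''$. Differentiating the Fisher information along the flow and integrating by parts produces the \emph{carré du champ} (Bakry--Émery) integrals: a term controlled by the Bochner--Lichnerowicz--Weitzenböck formula on $\S^d$, which on the sphere reads $\frac12\Delta|\nabla u|^2=|\mathrm{Hess}\,u|^2+\nabla u\cdot\nabla\Delta u+(d-1)|\nabla u|^2$. The Ricci curvature of $\S^d$ contributes the factor $(d-1)$, and this is what eventually yields the coefficient $2d$ in front of $\mathsf e'$ in \eqref{BEode}. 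The harder part is the quadratic-in-second-order terms: one must bound $\big(\mathrm{Hess}\,u-\tfrac1d\,\Delta u\,\mathrm{Id}\big)$ and the nonlinear gradient terms $|\nabla u|^4/u^2$ arising from the $(p-1)|\nabla u|^2/u$ part of the flow. The main obstacle will be organizing these pointwise second-order quantities into a manifestly nonnegative combination plus the remainder term $\gamma\,|\mathsf e'|^2/\big(1-(p-2)\,\mathsf e\big)$, while tracking exactly where the constant $\gamma$ from \eqref{gamma1} and the Bakry--Émery exponent $2^\#$ enter.

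Concretely, after integrating by parts I would aim to write
\[
\mathsf i'+2\,d\,\mathsf i=-2\iSd{\Big(\big|\mathrm{Hess}\,u-\tfrac1d\,\Delta u\,g\big|^2+\text{(nonlinear curvature terms)}\Big)}+\text{(good sign)}\,,
\]
then invoke the pointwise Cauchy--Schwarz / Poincaré-type estimate that underlies the Bakry--Émery criterion to absorb the indefinite nonlinear terms. The choice of $\gamma$ as in \eqref{gamma1} is exactly the largest constant for which the remaining quadratic form in the Hessian variables and the $|\nabla u|^2$ direction stays nonnegative; the condition $1\le p\le2^\#$ is precisely the constraint guaranteeing this nonnegativity (with $p=2^\#$ the boundary case). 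Finally, rewriting $\mathsf i=-\mathsf e'$ and $\mathsf i'=-\mathsf e''$ and noting $1-(p-2)\,\mathsf e=\nrmSd u2^2/\nrmSd up^2>0$ by Hölder, the inequality $-\mathsf e''-2\,d\,(-\mathsf e')+\gamma\,|\mathsf e'|^2/\big(1-(p-2)\,\mathsf e\big)\le0$ rearranges into \eqref{BEode}. I would handle the $d=1$ case separately since there is no Ricci term and $\gamma$ has a different expression, but the algebraic structure is the same.
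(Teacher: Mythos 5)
Your outline correctly reconstructs the classical Bakry--\'Emery part of the argument: conservation of $\iSd{|u|^p}$ under \eqref{HeatFlow}, differentiation of the Fisher information, the Bochner--Lichnerowicz--Weitzenb\"ock formula with Ricci curvature $(d-1)\,g$, and a completion of the square in the trace-free Hessian whose admissibility is exactly the condition $\gamma\ge0$, \emph{i.e.}, $p\le2^\#$. This is how one obtains (see~\cite{Dolbeault2017133}, as recalled in the paper)
\[
\frac12\(\mathsf i'+2\,d\,\mathsf i\)=-\,\frac d{d-1}\iSd{\left\|\mathrm L u-(p-1)\,\frac{d-1}{d+2}\,\mathrm M u\right\|^2}-\,\gamma\iSd{\frac{|\nabla u|^4}{u^2}}\,.
\]
But note what this alone gives: $\mathsf e''+2\,d\,\mathsf e'\ge4\,\gamma\iSd{|\nabla u|^4/u^2}\ge0$, which is only the unimproved statement behind \eqref{Ineq:GNS}. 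The missing idea --- and it is the heart of Lemma~\ref{Lem:ODE} --- is how the leftover integral $\iSd{|\nabla u|^4/u^2}$ gets converted into $|\mathsf e'|^2/\big(1-(p-2)\,\mathsf e\big)$. You ascribe this to ``the pointwise Cauchy--Schwarz / Poincar\'e-type estimate that underlies the Bakry--\'Emery criterion'', but no pointwise inequality on the integrand can produce such a term, because $|\mathsf e'|^2/\big(1-(p-2)\,\mathsf e\big)$ is the square of an integral divided by another integral. The actual mechanism, due to~\cite{MR2152502} and exploited in~\cite{DEKL}, is an \emph{integral} Cauchy--Schwarz inequality in $\mathrm L^2(\S^d,d\mu)$ applied to the pair $u$ and $|\nabla u|^2/u$:
\[
\mathsf i^2=\(\kern4pt\iSd{u\cdot\frac{|\nabla u|^2}u}\)^2\le\iSd{u^2}\iSd{\frac{|\nabla u|^4}{u^2}}=\big(1-(p-2)\,\mathsf e\big)\iSd{\frac{|\nabla u|^4}{u^2}}\,,
\]
where the last equality requires the normalization $\nrmSd up=1$, legitimate because the flow conserves $\nrmSd up$. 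Inserting the resulting lower bound for $\iSd{|\nabla u|^4/u^2}$ into the identity above and using $\mathsf e'=-\,2\,\mathsf i$ yields exactly \eqref{BEode}. Without this step your argument stalls at the classical differential inequality $\mathsf e''+2\,d\,\mathsf e'\ge0$.

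Two further points need care. First, with the paper's definitions one finds $\mathsf e'=-\,2\,\mathsf i$, not $\mathsf e'=-\,\mathsf i$; since the right-hand side of \eqref{BEode} is quadratic in $\mathsf e'$ while the left-hand side is linear, this factor is not a harmless convention: it changes the constant in front of the quadratic term, so it must be tracked exactly (your final rearrangement, which moreover carries a sign slip in the term $-2\,d\,(-\mathsf e')$, would not reproduce \eqref{BEode} with the correct constant). Second, the identification $1-(p-2)\,\mathsf e=\nrmSd u2^2/\nrmSd up^2$ that you invoke holds only under the same normalization $\nrmSd up=1$, because $\mathsf e$ is not scale-invariant; this normalization is not a cosmetic positivity remark but precisely the point where the conservation law enters the improvement quantitatively, through the equality $\iSd{u^2}=1-(p-2)\,\mathsf e$ used in the Cauchy--Schwarz step.
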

%---------------------------------------------------------------------
\begin{proof} Since~\eqref{HeatFlow} amounts to $\frac{\partial u^p}{\partial t}=\Delta u^p$, it is straightforward to check that
\[
\frac d{dt}\iSd{|u(t,\cdot)|^p}=0\quad\mbox{and}\quad\mathsf e'=-\,2\,\mathsf i\,.
\]
Let us summarize results that can be found in~\cite{MR2381156,DEKL,Dolbeault20141338,1504}. We adopt the presentation of the proof of~\cite[Lemma~4.3]{Dolbeault2017133}. With $\S^d$ considered as a $d$-dimensional compact manifold with metric $g$ and measure $d\mu$, let us introduce some notation. If $\mathrm A_{ij}$ and $\mathrm B_{ij}$ are two tensors, then
\[
\mathrm A:\mathrm B:=g^{im}\,g^{jn}\,\mathrm A_{ij}\,\mathrm B_{mn}\quad\mbox{and}\quad\|\mathrm A\|^2:=\mathrm A:\mathrm A\,.
\]
Here $g^{ij}$ is the inverse of the metric tensor, \emph{i.e.}, $g^{ij}\,g_{jk}=\delta^i_{\!k}$. We use the Einstein summation convention and $\delta^i_{\!k}$ denotes the Kronecker symbol. Let us denote the \emph{Hessian} by $\mathrm H u$ and define the \emph{trace-free Hessian} by
\[
\mathrm L u:=\mathrm H u-\frac1d\,(\Delta u)\,g\,.
\]
We also define the trace-free tensor
\[
\mathrm M u:=\frac{\nabla u\otimes\nabla u}u-\frac1d\,\frac{|\nabla u|^2}u\,g\,.
\]
An elementary but lengthy computation that can be found in~\cite{Dolbeault2017133} shows that
\[\label{restgamma}
\frac12\(\mathsf i-d\,\mathsf e\)'=\frac12\(\mathsf i'+2\,d\,\mathsf i\)=-\,\frac d{d-1}\iSd{\left\|\mathrm L u-(p-1)\,\frac{d-1}{d+2}\,\mathrm M u\right\|^2}-\gamma\iSd{\frac{|\nabla u|^4}{u^2}}
\]
where $\gamma$ is given by~\eqref{gamma1}. In the framework of the \emph{carr\'e du champ} method of D.~Bakry and M.~Emery applied to a solution $u$ of~\eqref{HeatFlow}, the admissible range for $p$ is therefore~\eqref{Range:p} as shown in~\cite{MR808640,1504}: this is the range in which we know that $\gamma\ge0$. Since $\lim_{t\to+\infty}\mathsf e(t)=\lim_{t\to+\infty}\mathsf i(t)=0$ and $\frac d{dt}(\mathsf i-\,d\,\mathsf e)=\mathsf i'+2\,\,d\,\mathsf i\le0$, it is straightforward to deduce that $\mathsf i-\,d\,\mathsf e\ge0$ for any $t\ge0$ and, as a special case, at $t=0$ for an arbitrary initial datum. This completes the proof of~\eqref{Ineq:GNS}, after replacing $u$ by $|u|$ and removing the assumption $u>0$ by a density argument.

Following an idea of~\cite{MR2152502}, it has been observed in~\cite{DEKL} that an improvement is achieved for any $p\in[1,2)\cup(2,2^\#)$ using
\[
\mathsf i^2=\(\kern4pt\iSd{u\cdot\frac{|\nabla u|^2}u}\)^2\le\iSd{u^2}\iSd{\frac{|\nabla u|^4}{u^2}}=\(1-(p-2)\,\mathsf e\)\iSd{\frac{|\nabla u|^4}{u^2}}
\]
where the last equality holds if we impose that $\nrmSd up=1$ at $t=0$. This completes the proof of Lemma~\ref{Lem:ODE}.\end{proof}

%---------------------------------------------------------------------
\begin{lemma}\label{Lem:ODEsolution} For any $\gamma\ge0$, the solution $\varphi$ of
\be{ode}
\varphi'(s)=1+\frac{\gamma\,\varphi(s)}{1-(p-2)\,s}\,,\quad\varphi(0)=0\,,
\ee
is given by~\eqref{phifunction}.\end{lemma}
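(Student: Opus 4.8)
The plan is to solve the linear first-order ODE \eqref{ode} directly, treating the two cases $\gamma\neq 2-p$ and $\gamma=2-p$ separately since they correspond to distinct behaviors of the integrating factor. First I would rewrite \eqref{ode} in standard linear form as $\varphi'(s)-\frac{\gamma}{1-(p-2)\,s}\,\varphi(s)=1$ and compute the integrating factor $\exp\!\big(-\int\frac{\gamma}{1-(p-2)\,s}\,ds\big)$. Since $\int\frac{\gamma}{1-(p-2)\,s}\,ds=-\frac{\gamma}{p-2}\log\big(1-(p-2)\,s\big)$, the integrating factor is $\big(1-(p-2)\,s\big)^{\frac{\gamma}{p-2}}$. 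Multiplying through gives $\frac{d}{ds}\Big[\big(1-(p-2)\,s\big)^{\frac{\gamma}{p-2}}\,\varphi(s)\Big]=\big(1-(p-2)\,s\big)^{\frac{\gamma}{p-2}}$.

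Next I would integrate the right-hand side from $0$ to $s$ and impose the initial condition $\varphi(0)=0$. The antiderivative of $\big(1-(p-2)\,s\big)^{\frac{\gamma}{p-2}}$ involves raising the exponent by one, producing a factor $\tfrac{1}{p-2}\cdot\tfrac{1}{\frac{\gamma}{p-2}+1}=\tfrac{1}{\gamma+p-2}$, provided $\gamma+p-2\neq 0$, i.e. $\gamma\neq 2-p$. Carrying out this integration and dividing back by the integrating factor yields, after simplification, exactly the first line of \eqref{phifunction}; I would verify the algebra reorganizes the prefactor into $\tfrac{1}{2-p-\gamma}$ and the exponent into $-\tfrac{\gamma}{p-2}$ as written. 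This is a routine but bookkeeping-heavy simplification where sign conventions between $p-2$ and $2-p$ must be tracked carefully.

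In the borderline case $\gamma=2-p$, the exponent $\frac{\gamma}{p-2}=-1$, so the integrating factor is $\big(1-(p-2)\,s\big)^{-1}=\big(1+(2-p)\,s\big)^{-1}$, and the integral $\int_0^s\big(1+(2-p)\,s'\big)^{-1}\,ds'$ produces a logarithm rather than a power, namely $\tfrac{1}{2-p}\log\big(1+(2-p)\,s\big)$. Multiplying by the inverse integrating factor $\big(1+(2-p)\,s\big)$ recovers the second line of \eqref{phifunction}. I expect the main obstacle to be purely notational: keeping the substitutions $1-(p-2)\,s=1+(2-p)\,s$ consistent and confirming that the degenerate logarithmic case is exactly the limit $\gamma\to 2-p$ of the generic formula, which serves as a useful sanity check but is not strictly needed for the proof. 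An alternative, and perhaps cleaner, route is simply to differentiate the claimed formulas \eqref{phifunction} and verify that they satisfy \eqref{ode} together with $\varphi(0)=0$, invoking uniqueness for the linear ODE; I would likely present this verification approach since it sidesteps the integration bookkeeping entirely.
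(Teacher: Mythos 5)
Your proposal is correct, and your preferred route --- verifying by differentiation that the formulas in \eqref{phifunction} satisfy \eqref{ode} with $\varphi(0)=0$ and invoking uniqueness for the linear ODE --- is exactly the paper's own (very brief) proof. The integrating-factor derivation you sketch, including the correct handling of the borderline case $\gamma=2-p$ where the antiderivative becomes logarithmic, is also sound and simply supplies the computation the paper leaves implicit.
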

%---------------------------------------------------------------------
\begin{proof} The solution of~\eqref{ode} is unique and it is a straightforward computation that $\varphi$ given by~\eqref{phifunction} solves~\eqref{ode}.\end{proof}
%---------------------------------------------------------------------
\begin{lemma}\label{Lem:ODEvarphi} Let $d\ge1$, $\gamma$ be given by~\eqref{gamma1} and assume that $p$ is in the range~\eqref{Range:p}. Then~\eqref{improved} holds with $\varphi$ given by~\eqref{phifunction}.\end{lemma}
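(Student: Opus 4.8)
The plan is to combine the differential inequality from Lemma~\ref{Lem:ODE} with the ODE solved by $\varphi$ in Lemma~\ref{Lem:ODEsolution}, by showing that $\varphi(\mathsf e)$ is monotone along the heat flow. The key observation is that Lemma~\ref{Lem:ODE} gives a second-order differential inequality for $\mathsf e(t)$, while Lemma~\ref{Lem:ODEsolution} produces a first-order ODE for $\varphi$ whose right-hand side matches the structure of~\eqref{BEode}. The natural bridge between them is the quantity $\mathsf j(t):=\mathsf i(t)-d\,\varphi(\mathsf e(t))$, which I claim is nonnegative and decreasing; evaluating $\mathsf j(0)\ge\lim_{t\to\infty}\mathsf j(t)=0$ will yield~\eqref{improved}.

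First I would recall from the proof of Lemma~\ref{Lem:ODE} that $\mathsf e'=-2\,\mathsf i$, so that $\mathsf i=-\tfrac12\,\mathsf e'$ and $\mathsf i'=-\tfrac12\,\mathsf e''$. Substituting these into~\eqref{BEode} rewrites the Bakry-Emery inequality purely in terms of $\mathsf i$ and $\mathsf e$ as
\be{rewritten}
\mathsf i'+2\,d\,\mathsf i\le-\,\frac{2\,\gamma\,\mathsf i^2}{1-(p-2)\,\mathsf e}\,,
\ee
after dividing by $-2$ (which reverses the inequality). Next I would differentiate $\mathsf j=\mathsf i-d\,\varphi(\mathsf e)$ along the flow, using the chain rule and $\mathsf e'=-2\,\mathsf i$:
\[
\mathsf j'=\mathsf i'-d\,\varphi'(\mathsf e)\,\mathsf e'=\mathsf i'+2\,d\,\mathsf i\,\varphi'(\mathsf e)\,.
\]
Now I invoke the ODE~\eqref{ode} for $\varphi$, which gives $\varphi'(\mathsf e)=1+\tfrac{\gamma\,\varphi(\mathsf e)}{1-(p-2)\,\mathsf e}$. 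Inserting this expression and then using~\eqref{rewritten} to bound $\mathsf i'+2\,d\,\mathsf i$ from above should produce
\[
\mathsf j'\le\frac{2\,\gamma\,\mathsf i}{1-(p-2)\,\mathsf e}\,\big(d\,\varphi(\mathsf e)-\mathsf i\big)=-\,\frac{2\,\gamma\,\mathsf i}{1-(p-2)\,\mathsf e}\,\mathsf j\,.
\]
Since $\mathsf i\ge0$, $\gamma\ge0$ in the admissible range, and $1-(p-2)\,\mathsf e=\nrmSd u2^2>0$, the coefficient multiplying $\mathsf j$ is nonpositive, so this is a linear differential inequality of the form $\mathsf j'\le-\,c(t)\,\mathsf j$ with $c\ge0$. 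A Grönwall-type argument then shows $\mathsf j(t)$ cannot cross zero from above: combined with $\mathsf j(\infty)=0$ (because both $\mathsf e$ and $\mathsf i$ tend to $0$ and $\varphi(0)=0$), this forces $\mathsf j(t)\ge0$ for all $t\ge0$, in particular $\mathsf j(0)=\mathsf i(0)-d\,\varphi(\mathsf e(0))\ge0$, which is exactly~\eqref{improved} at the arbitrary initial datum.

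The main obstacle will be the sign bookkeeping when passing from~\eqref{BEode} to~\eqref{rewritten} and then threading the substitution of $\varphi'$ so that the two $\gamma$-terms cancel cleanly into the factor $\big(d\,\varphi(\mathsf e)-\mathsf i\big)$; this is precisely where the ODE~\eqref{ode} was engineered to match the remainder in~\eqref{BEode}, and any error in a sign reverses the wrong inequality. A secondary technical point is justifying the limit $\mathsf j(\infty)=0$ and the Grönwall conclusion rigorously, which requires knowing that $\mathsf e$ stays in the admissible interval so that $1-(p-2)\,\mathsf e>0$ for all $t\ge0$ — this is guaranteed because the normalization $\nrmSd up=1$ is preserved and $\nrmSd u2^2$ remains strictly positive along the flow. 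Finally, as in Lemma~\ref{Lem:ODE}, one removes the positivity assumption $u>0$ and the normalization by replacing $u$ with $|u|$ and by homogeneity, completing the proof of the stated improved inequality.
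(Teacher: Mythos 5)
Your proposal is correct and follows essentially the same route as the paper: both differentiate $\mathsf j=\mathsf i-d\,\varphi(\mathsf e)$ along the heat flow, combine the ODE~\eqref{ode} for $\varphi$ with~\eqref{BEode} (via $\mathsf e'=-2\,\mathsf i$) to obtain $\mathsf j'\le-\,\tfrac{2\,\gamma\,\mathsf i}{1-(p-2)\,\mathsf e}\,\mathsf j$, and conclude from the vanishing of $\mathsf e$ and $\mathsf i$ as $t\to+\infty$ together with homogeneity. The only cosmetic difference is that you first rewrite~\eqref{BEode} in terms of $\mathsf i$ and $\mathsf i'$ whereas the paper keeps it in terms of $\mathsf e''$ and $\mathsf e'$, and your Gr\"onwall endgame is stated at the same (terse but fixable) level of rigor as the paper's asymptotic argument.
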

%---------------------------------------------------------------------
\begin{proof} With the notation of Lemma~\ref{Lem:ODE}, we compute
\[
2\,\frac d{dt}\(\mathsf i-\,d\,\varphi(\mathsf e)\)=-\(\mathsf e''+2\,d\,\mathsf e'\)-\,2\,d\,\mathsf e'\,\frac{\gamma\,\varphi(\mathsf e)}{1-(p-2)\,\mathsf e}\le-\,\frac{4\,\gamma\,\mathsf i}{1-(p-2)\,\mathsf e}\(\mathsf i-\,d\,\varphi(\mathsf e)\)
\]
using~\eqref{ode} in the equality and then~\eqref{BEode} in the inequality. Since $\lim_{t\to+\infty}\mathsf e(t)=\lim_{t\to+\infty}\mathsf i(t)=0$ and $\mathsf i-\,d\,\varphi(\mathsf e)\sim\mathsf i-\,d\,\mathsf e\ge0$ in the asymptotic regime as $t\to+\infty$, this proves that for functions $u$ satisfying $\nrmSd up=1$,
\[
\mathsf i\ge\,d\,\varphi(\mathsf e)\,.
\]
By homogeneity, this proves~\eqref{improved} for an arbitrary function $u$.\end{proof}

Theorem~\ref{Thm:Interp} is then obtained by replacing $\varphi$ in~\eqref{improved} by the expression in~\eqref{phifunction}. As noted in Section~\ref{Sec:Main}, Theorem~\ref{Thm:Stability} is a simple consequence of Theorem~\ref{Thm:Interp} and of the stereographic projection using the computations of Appendix~\ref{App:Stereographic}. Theorem~\ref{Thm:AFST} is also a straightforward consequence of~\cite[Theorem~2.2 and Proposition~5.4]{1504} using the stereographic projection. Hence all results of Section~\ref{Sec:Main} are established except Theorem~\ref{Thm:Lambda-Mu}.

A sharper version of Theorem~\ref{Thm:Stability}, valid for any $p$ in the range~\eqref{Range:p}, can be deduced directly from~\eqref{improved} with $\varphi$ given by~\eqref{phifunction} using the stereographic projection. It goes as follows.
%---------------------------------------------------------------------
\begin{proposition}\label{Prop:Stability} Let $d\ge2$ and assume that $p$ is in the range~\eqref{Range:p}. Then for any $v\in\mathrm L^2\big(\R^d,\cb x^{-4}\,dx\big)$ such that $\nabla v\in\mathrm L^2(\R^d,dx)$ we have
\begin{multline*}
\iRd{|\nabla v|^2}-d\,(d-2)\iRd{\frac{|v|^2}{\cb x^4}}\\
\ge\frac{4\,d}{2-p-\gamma}\left[\,\iRd{\frac{|v|^2}{\cb x^4}}-\kappa_p^{1-\frac\gamma{2-p}}\(\iRd{\frac{|v|^p}{\cb x^{\delta(p)}}}\)^{\frac2p\(1-\frac\gamma{2-p}\)}\(\iRd{\frac{|v|^2}{\cb x^4}}\)^{\frac\gamma{2-p}}\right]
\end{multline*}
if $\gamma\neq2-p$, and
\[
\iRd{|\nabla v|^2}-d\,(d-2)\iRd{\frac{|v|^2}{\cb x^4}}\ge\frac{8\,d}{p-2}\,\(\iRd{\frac{|v|^2}{\cb x^4}}\)\,\log\(\kappa_p^{-1}\,\frac{\iRd{\frac{|v|^2}{\cb x^4}}}{\iRd{\frac{|v|^p}{\cb x^{\delta(p)}}}}\)
\]
if $\gamma=2-p$, where $\kappa_p=2^{\frac{\delta(p)}p-2}\,\big|\S^d\big|^{1-\frac2p}$.\end{proposition}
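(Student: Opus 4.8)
The plan is to derive Proposition~\ref{Prop:Stability} directly from the improved sphere inequalities~\eqref{improvedineq} and~\eqref{improvedineqLog} of Theorem~\ref{Thm:Interp} by transporting them to $\R^d$ via the stereographic projection, using the dictionary set up in Appendix~\ref{App:Stereographic}. The key observation is that the stereographic projection gives an explicit isometric-type correspondence between a function $u$ on $\S^d$ and a function $v$ on $\R^d$ through $u(\omega)=\cb x^{d-2}\,v(x)$ (up to the normalizing convention relating $d\mu$ and the weighted Lebesgue measure). Under this correspondence, the three sphere quantities $\nrmSd{\nabla u}2^2$, $\nrmSd u2^2$ and $\nrmSd up^2$ become, respectively, a combination of $\iRd{|\nabla v|^2}$ and $\iRd{|v|^2\cb x^{-4}}$, a multiple of $\iRd{|v|^2\cb x^{-4}}$, and a multiple of $\(\iRd{|v|^p\cb x^{-\delta(p)}}\)^{2/p}$, with $\delta(p)=2d-p(d-2)$.

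First I would record the precise translation formulas. Writing $A:=\iRd{|v|^2\cb x^{-4}}$ and $B:=\(\iRd{|v|^p\cb x^{-\delta(p)}}\)^{2/p}$, the Appendix gives constants so that $\nrmSd u2^2=c_2\,A$ and $\nrmSd up^2=c_p\,B$ with explicit $c_2,c_p$, and the gradient identity reads $\nrmSd{\nabla u}2^2=e_d\,\big(\iRd{|\nabla v|^2}-d(d-2)\,A\big)$ for an explicit positive constant $e_d$ (this is exactly the identity that produces the $d(d-2)\iRd{|v|^2\cb x^{-4}}$ term and that makes $v_\star=\cb x^{2-d}\leftrightarrow u\equiv\mathrm{const}$). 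The constant $\kappa_p=2^{\delta(p)/p-2}\,\big|\S^d\big|^{1-2/p}$ in the statement is precisely the ratio $c_2/(c_p^{?})$ bookkeeping that converts $\nrmSd u2^2$ and $\nrmSd up^2$ into $A$ and $B$; I would verify that $\kappa_p$ is exactly the factor such that $\nrmSd u2^2/\nrmSd up^{2(1-\gamma/(2-p))}\nrmSd u2^{2\gamma/(2-p)}$ translates into the bracketed expression with $\kappa_p^{1-\gamma/(2-p)}$.

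Next I would substitute these identities into~\eqref{improvedineq}. The left side $\nrmSd{\nabla u}2^2$ becomes $e_d\,(\iRd{|\nabla v|^2}-d(d-2)A)$; the right side $\frac d{2-p-\gamma}\big(\nrmSd u2^2-\nrmSd up^{2-2\gamma/(2-p)}\nrmSd u2^{2\gamma/(2-p)}\big)$ becomes $\frac d{2-p-\gamma}\big(c_2 A - c_p^{1-\gamma/(2-p)}\,c_2^{\gamma/(2-p)}\,B^{1-\gamma/(2-p)}A^{\gamma/(2-p)}\big)$. Dividing through by $e_d$ and simplifying the constant prefactors is then just algebra: the factor $d/(2-p-\gamma)$ together with $c_2/e_d$ must collapse to the announced $4d/(2-p-\gamma)$, and the remaining constants must package into $\kappa_p^{1-\gamma/(2-p)}$ as claimed. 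The logarithmic case $\gamma=2-p$ is handled identically starting from~\eqref{improvedineqLog}: the substitution turns $\frac{2d}{p-2}\nrmSd u2^2\log(\nrmSd u2^2/\nrmSd up^2)$ into $\frac{2d}{p-2}\,c_2 A\,\log(c_2 A/(c_p B))$, and after dividing by $e_d$ the prefactor $2d\,c_2/(e_d(p-2))$ must equal $8d/(p-2)$ while the constants inside the logarithm combine to $\kappa_p^{-1}$.

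The main obstacle is purely computational: getting all the stereographic constants $c_2$, $c_p$, $e_d$ right so that the bookkeeping produces exactly the stated numerical factors $4d/(2-p-\gamma)$, $8d/(p-2)$ and $\kappa_p$, rather than some equivalent but differently normalized expression. There is no analytic difficulty — the inequalities~\eqref{improvedineq}–\eqref{improvedineqLog} are already proved, and the stereographic projection is an exact change of variables — so the work is confined to matching conventions in Appendix~\ref{App:Stereographic} (in particular the normalization of $d\mu$ as a probability measure, which introduces the $\big|\S^d\big|^{1-2/p}$ powers, and the conformal factor $\cb x^{-2}$ that governs both the measure and the $u=\cb x^{d-2}v$ substitution). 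I would therefore carry out the constant-matching carefully once for $p\neq 2^*$, checking the equality case $v=v_\star$ as a consistency test (both sides must vanish there), and then read off the two displayed inequalities.
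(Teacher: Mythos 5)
Your proposal is correct and is exactly the paper's own argument: the paper deduces Proposition~\ref{Prop:Stability} directly from~\eqref{improvedineq}--\eqref{improvedineqLog} via the stereographic projection of Appendix~\ref{App:Stereographic}, and your constant bookkeeping checks out ($c_2=4\,\big|\S^d\big|^{-1}$, $c_p=2^{\delta(p)/p}\,\big|\S^d\big|^{-2/p}$, $e_d=\big|\S^d\big|^{-1}$, so $c_2/e_d=4$ and $c_p/c_2=\kappa_p$, yielding the factors $4\,d/(2-p-\gamma)$, $8\,d/(p-2)$ and $\kappa_p$ as claimed).
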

%---------------------------------------------------------------------

%---------------------------------------------------------------------
\begin{remark}\label{Rem:Bianchi-Egnell} Inequalities~\eqref{improvedineq}-\eqref{improvedineqLog} are key estimates in this paper. Because of the convexity of the function $\varphi$ defined by~\eqref{phifunction}, we know that~\eqref{improvedineq} and~\eqref{improvedineqLog} are stronger than~\eqref{Ineq:GNS} and~\eqref{Ineq:logSob}, even if all these inequalities are optimal.

The fact that
\[
\frac1{2-p-\gamma}\(\nrmSd u2^2-\nrmSd up^{2-\frac{2\,\gamma}{2-p}}\,\nrmSd u2^{\frac{2\,\gamma}{2-p}}\)\ge\frac1{p-2}\(\nrmSd up^2-\nrmSd u2^2\)
\]
can be recovered using H\"older's inequality. For instance, if $p>2$, we know that $\nrmSd u2\le\nrmSd up$. By homogeneity, we can assume without loss of generality that $\nrmSd u2=1$ and $t=\nrmSd up^2\ge1$. With $\theta=\gamma/(p-2)$, this amounts to
\[
t^{1+\theta}-1\ge(1+\theta)\,(t-1)
\]
which is obviously satisfied for any $t\ge1$ because $\theta$ is nonnegative. Similar arguments apply if $p<2$, $p\neq p_*(d)$ and the case $p=p_*(d)$ is obtained as a limit case. The difference of the two sides in the inequality is the measure of the distance to the constants.

As in~\cite{MR1124290}, the stability can also be obtained in the stronger semi-norm $u\mapsto\iSd{|\nabla u|^2}$. We can indeed rewrite the improved inequality as
\[\label{firsttt}
\mathsf e\le\,\varphi^{-1}\(\frac{\mathsf i}d\)\,,
\]
for any $u$ satisfying $\nrmSd up^2=1$, and obtain that
\[\label{seconddd}
\mathsf i\ge\,d\,\mathsf e+\widetilde\psi(\mathsf i)\quad\mbox{where}\quad\widetilde\psi(\mathsf i)=\mathsf i-d\,\varphi^{-1}\(\frac{\mathsf i}d\)\ge0\,.
\]
\end{remark}
%---------------------------------------------------------------------

An explicit lower bound for $\mu(\lambda)$ has been obtained in~\cite[Proposition~8]{DolEsLa-APDE2014}. Let us recall it with a sketch of the proof, for completeness.
%---------------------------------------------------------------------
\begin{proposition}[\cite{DolEsLa-APDE2014}]\label{PropDolEsLa-APDE2014} Assume that $d\ge3$ and let $\theta=d\,\frac{p-2}{2\,p}$. Then
\[
\mu(\lambda)\ge\frac {p-2}d\(\frac14\,d\,(d-2)\)^\theta\(\frac{\lambda\,d}{p-2}\)^{1-\theta}\quad\forall\,\lambda\,\ge1\,.
\]\end{proposition}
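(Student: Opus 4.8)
The plan is to derive the bound from the sharp Sobolev inequality on $\S^d$ by H\"older interpolation followed by a single weighted arithmetic-geometric mean inequality. Since $\mu(\lambda)$ is by definition the best constant in~\eqref{Ineq:GNSgen1}, it suffices to prove that
\[
\frac{p-2}d\,\nrmSd{\nabla u}2^2+\lambda\,\nrmSd u2^2\ge\frac{p-2}d\(\frac14\,d\,(d-2)\)^\theta\(\frac{\lambda\,d}{p-2}\)^{1-\theta}\,\nrmSd up^2
\]
for every $u\in\mathrm H^1(\S^d)$. Throughout I write $A=\frac{p-2}d$, $A'=\frac14\,d\,(d-2)$, $a=\nrmSd{\nabla u}2^2$ and $b=\nrmSd u2^2$, so that the target constant is exactly $(A\,A')^\theta\,\lambda^{1-\theta}$ while the left-hand side equals $A\,a+\lambda\,b$.

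First I would invoke~\eqref{Ineq:GNS} at the critical exponent $p=2^*$, the sharp Sobolev inequality on $\S^d$, which is available since $d\ge3$: it reads $\nrmSd{\nabla u}2^2\ge A'\(\nrmSd u{2^*}^2-\nrmSd u2^2\)$ and rearranges to $\nrmSd u{2^*}^2\le\frac a{A'}+b$. Next I would interpolate the $\mathrm L^p$ norm between $\mathrm L^2$ and $\mathrm L^{2^*}$: solving $\frac1p=\frac{1-\alpha}2+\frac\alpha{2^*}$ gives $\alpha=\theta=d\,\frac{p-2}{2\,p}\in(0,1)$, so that H\"older's inequality yields
\[
\nrmSd up^2\le\nrmSd u2^{2\,(1-\theta)}\,\nrmSd u{2^*}^{2\,\theta}\le b^{1-\theta}\(\frac a{A'}+b\)^\theta.
\]

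To conclude, the key algebraic step is to recognize that the target constant regroups as
\[
(A\,A')^\theta\,\lambda^{1-\theta}\,b^{1-\theta}\(\frac a{A'}+b\)^\theta=(\lambda\,b)^{1-\theta}\,(A\,a+A\,A'\,b)^\theta,
\]
since $A\,A'\(\frac a{A'}+b\)=A\,a+A\,A'\,b$. The weighted arithmetic-geometric mean inequality $x^{1-\theta}\,y^\theta\le(1-\theta)\,x+\theta\,y$, applied with $x=\lambda\,b$ and $y=A\,a+A\,A'\,b$, bounds this quantity by $\theta\,A\,a+\big((1-\theta)\,\lambda+\theta\,A\,A'\big)\,b$. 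Here $\theta\,A\le A$ because $\theta<1$, while $(1-\theta)\,\lambda+\theta\,A\,A'\le\lambda$ is equivalent to $A\,A'\le\lambda$; and indeed $A\,A'=\frac14\,(p-2)\,(d-2)\le1\le\lambda$ precisely because $p\le2^*$ and $\lambda\ge1$. Combined with the H\"older estimate, this shows that the target constant times $\nrmSd up^2$ is at most $A\,a+\lambda\,b$, which is the sought inequality.

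I expect the only delicate point to be this regrouping, which exhibits the target constant as the Young-type pairing $(\lambda\,b)^{1-\theta}\,(A\,a+A\,A'\,b)^\theta$; once it is seen, the whole argument collapses to the single constraint $A\,A'\le\lambda$, that is, to subcriticality $p\le2^*$ together with $\lambda\ge1$. A reassuring consistency check is the limit $p\to2^*$, in which $\theta\to1$, the bound becomes $\mu(\lambda)\ge1$, and one recovers the sharp Sobolev constant.
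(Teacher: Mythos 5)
Your argument is correct and is essentially the paper's own proof: both rest on the sharp Sobolev inequality at the critical exponent combined with the H\"older interpolation $\nrmSd up\le\nrmSd u2^{1-\theta}\,\nrmSd u{2^*}^{\theta}$, and both reduce to exactly the same two constraints, namely $\frac14\,(p-2)\,(d-2)\le1$ (subcriticality) and $\lambda\ge1$. The only difference is how the elementary final step is organized: the paper writes the quotient $\big(\nrmSd{\nabla u}2^2+\tfrac{\lambda\,d}{p-2}\,\nrmSd u2^2\big)\big/\nrmSd up^2$ as a product of a $\theta$-power and a $(1-\theta)$-power and bounds the two factors separately, whereas you bound $\nrmSd up^2$ first and then recombine the same two estimates through the weighted arithmetic-geometric mean inequality.
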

%---------------------------------------------------------------------
Notice that this bound is limited to the case $d\ge3$ and $p\in(2,2^*)$.
\begin{proof} From H\"older's inequality $\nrmSd up\le\nrmSd u{2^*}^\theta\,\nrmSd u2^{1-\theta}$, we get that
\begin{multline*}
\frac{\nrmSd{\nabla u}2^2+\frac{\lambda\,d}{p-2}\,\nrmSd u2^2}{\nrmSd up^2}\\
\ge\(\frac{\nrmSd{\nabla u}2^2+\frac{\lambda\,d}{p-2}\,\nrmSd u2^2}{\nrmSd u{2^*}^2}\)^\theta\(\frac{\nrmSd{\nabla u}2^2+\frac{\lambda\,d}{p-2}\,\nrmSd u2^2}{\nrmSd u2^2}\)^{1-\theta}\,.
\end{multline*}
After dropping $\nrmSd{\nabla u}2^2$ in the second parenthesis of the right-hand side and observing that $1/(p-2)\ge(d-2)/4$, the conclusion holds using the Sobolev inequality in the first parenthesis. We indeed recall that $\mu(\lambda)=\frac14\,d\,(d-2)$ for any $\lambda\ge1$ if $p=2^*$.\end{proof}

We may notice that the estimate of Proposition~\ref{PropDolEsLa-APDE2014} captures the order in $\lambda$ of $\mu(\lambda)$ as $\lambda\to+\infty$ but is not accurate close to $\lambda=1$ and limited to the case $p\in(2,2^*)$ and $d\ge3$. It turns out that the whole range~\eqref{Range:p} for any $d\ge1$ can be covered as a consequence of Theorem~\ref{Thm:Interp} with a lower bound for $\mu(\lambda)$ which is increasing with respect to $\lambda\ge1$ and such that it takes the value $1$ if $\lambda=1$. This is essentially the contents of Theorem~\ref{Thm:Lambda-Mu} for $p\in(2,2^\#)$, which also covers the range $p\in[1,2)$.

\begin{proof}[Proof of Theorem~\ref{Thm:Lambda-Mu}] We shall distinguish several cases.

\smallskip\noindent\underline{1) Case $p\in(2,2^\#)$.}
Assume that $\lambda>1$ and $\theta>0$. We deduce from
\[
\underline\mu(\lambda):=\(\frac{(\theta+1)\,\lambda-1}\theta\)^\frac\theta{\theta+1}=\min_{t\ge1}\frac 1t\(\lambda+\frac{t^{1+\theta}-1}{1+\theta}\)
\]
that
\[
\frac{t^{1+\theta}-1}{1+\theta}\ge\underline\mu(\lambda)\,t-\lambda\quad\forall\,t\ge1\,.
\]
With $\theta=\frac\gamma{p-2}$, Inequality~\eqref{improvedineq} takes the form
\[
\frac{p-2}d\,\nrmSd{\nabla u}2^2\ge\frac1{1+\theta}\(\nrmSd up^{2\,(1+\theta)}\,\nrmSd u2^{-2\,\theta}-\nrmSd u2^2\)\quad\forall\,u\in\mathrm H^1(\S^d)\,.
\]
Using $t=\nrmSd up^2/\nrmSd u2^2\ge1$, the right-hand side satisfies
\begin{multline*}
\frac1{1+\theta}\(\nrmSd up^{2\,(1+\theta)}\,\nrmSd u2^{-2\,\theta}-\nrmSd u2^2\)=\frac{\nrmSd up^2}{1+\theta}\,\frac1t\(t^{1+\theta}-1\)\\
\ge\nrmSd up^2\(\underline\mu(\lambda)-\frac\lambda{t}\)=\underline\mu(\lambda)\,\nrmSd up^2-\lambda\,\nrmSd u2^2\,.
\end{multline*}
Hence we find
\[
\mu(\lambda)\ge\underline\mu(\lambda)=\(\lambda+\frac{p-2}\gamma\,(\lambda-1)\)^\frac\gamma{\gamma+p-2}\quad\forall\,\lambda\ge1\,.
\]

\smallskip\noindent\underline{2) Case $p\in\big(p_*(d),2\big)$.}
In this regime we have $\gamma>2-p$ and take $\theta=\frac\gamma{2-p}-1>0$. We deduce from
\[
\underline\lambda(\mu):=\frac{(\theta+1)\,\mu^{\frac\theta{\theta+1}}-1}{\theta}=\min_{t\in[0,1]}\(\frac{t^{-\theta}-1}{\theta}+\mu\,t\)
\]
that
\[
\frac{t^{-\theta}-1}{\theta}\ge\underline\lambda(\mu)-\mu\,t\quad\forall\,t\in[0,1]\,.
\]
Inequality~\eqref{improvedineq} takes the form
\[
\frac{2-p}d\,\nrmSd{\nabla u}2^2\ge\frac1\theta\(\nrmSd up^{-2\,\theta}\,\nrmSd u2^{2\,(1+\theta)}-\nrmSd u2^2\)\quad\forall\,u\in\mathrm H^1(\S^d)\,.
\]
Using $t=\nrmSd up^2/\nrmSd u2^2\le1$, the right-hand side satisfies
\begin{multline*}
\frac1\theta\(\nrmSd up^{-2\,\theta}\,\nrmSd u2^{2\,(1+\theta)}-\nrmSd u2^2\)=\nrmSd u2^2\,\frac{t^{-\theta}-1}\theta\\
\ge\nrmSd u2^2\(\underline\lambda(\mu)-\mu\,t\)=\underline\lambda(\mu)\,\nrmSd u2^2-\mu\,\nrmSd up^2\,.
\end{multline*}
Hence we find
\[
\lambda(\mu)\ge\underline\lambda(\mu)=\frac{2-p-\gamma\,\mu^{1-\frac{2-p}\gamma}}{2-p-\gamma}\quad\forall\,\mu\ge1\,.
\]

\smallskip\noindent\underline{3) Case $p=p_*(d)$.} It is achieved by taking the limit as $p\to p_*(d)$, but the estimate degenerates into $\lambda(\mu)\ge1$, which we already know because $\lambda(\mu)\ge\lambda(1)=1$ for any $\lambda\ge1$.

\smallskip\noindent\underline{4) Case $p\in\big(1,p_*(d)\big)$ and $d\neq2$.}
In this regime we have $\gamma<2-p$ and take $\theta=\frac\gamma{2-p}\in(0,1)$. We deduce from
\[
\underline\lambda(\mu):=\frac{1-\theta\,\mu^{1-\frac1\theta}}{1-\theta}=\min_{t\in[0,1]}\(\frac{1-t^{1-\theta}}{1-\theta}+\mu\,t\)
\]
that
\[
\frac{1-t^{1-\theta}}{1-\theta}\ge\underline\lambda(\mu)-\mu\,t\quad\forall\,t\in[0,1]\,.
\]
Inequality~\eqref{improvedineq} takes the form
\[
\frac{2-p}d\,\nrmSd{\nabla u}2^2\ge\frac1{1-\theta}\(\nrmSd u2^2-\nrmSd up^{2\,(1-\theta)}\,\nrmSd u2^{2\,\theta}\)\quad\forall\,u\in\mathrm H^1(\S^d)\,.
\]
Using $t=\nrmSd up^2/\nrmSd u2^2\le1$, the right-hand side satisfies
\begin{multline*}
\frac1{1-\theta}\(\nrmSd u2^2-\nrmSd up^{2\,(1-\theta)}\,\nrmSd u2^{2\,\theta}\)=\nrmSd u2^2\,\frac{1-t^{1-\theta}}{1-\theta}\\
\ge\nrmSd u2^2\(\underline\lambda(\mu)-\mu\,t\)=\underline\lambda(\mu)\,\nrmSd u2^2-\mu\,\nrmSd up^2\,.
\end{multline*}
Hence we find
\[
\lambda(\mu)\ge\underline\lambda(\mu)=\frac{2-p-\gamma\,\mu^{1-\frac{2-p}\gamma}}{2-p-\gamma}\quad\forall\,\mu\ge1\,.
\]
\end{proof}

%%%%%%%%%%%%%%%%%%%%%%%%%%%%%%%%%%%%%%%%%%%%%%%%%%%%%%%%%%%%%%%%%%%%%%%%%%
%%%%%%%%%%%%%%%%%%%%%%%%%%%%%%%%%%%%%%%%%%%%%%%%%%%%%%%%%%%%%%%%%%%%%%%%%%
\section{Inequalities based on nonlinear flows}\label{Sec:NonlinearFlows}

In this section, the range of $p$ is
\be{Range:p-NL}
p\in[1,2^*],\; p\ne 2 \quad\mbox{if}\quad d\ge3\quad\mbox{and}\quad p\in[1,+\infty), \; p\ne 2\quad\mbox{if}\quad d=1\,,2\,.
\ee
This range includes in particular the case \hbox{$2^\#< p<2^*$}, which was not covered in Section~\ref{Sec:Heat}. As in~\cite{MR2381156,DEKL,1504}, let us replace~\eqref{HeatFlow} by the nonlinear diffusion equation
\be{NLeqn}
\frac{\partial u}{\partial t}=u^{2-2\beta}\(\Delta u+\kappa\,\frac{|\nabla u|^2}u\)\,.
\ee
The parameter $\beta$ has to be chosen appropriately as we shall see below. With the choice $\kappa=\beta\,(p-2)+1$, one can check that
\[
\frac d{dt}\iSd{u(t,\cdot)^{\beta\,p}}=0
\]
because $\rho=u^{\beta\,p}$ solves the porous medium equation $\frac{\partial\rho}{\partial t}=\Delta\rho^m$ with $m$ such that
\be{Eqn:m-beta}
\frac1\beta+\frac p2=1+m\,\frac p2\,.
\ee
Notice that $m>0$ can be larger or smaller than $1$ depending on $\beta$, $d$ and $p$. The \emph{entropy} and the \emph{Fisher information} are redefined respectively by
\[
\mathsf e:=\frac 1{p-2}\(\bignrmSd{u^\beta}p^2-\bignrmSd{u^\beta}2^2\)\quad\mbox{and}\quad\mathsf i:=\bignrmSd{{\nabla u^\beta}}2^2\,.
\]
The equation $\mathsf e'=-\,2\,\mathsf i$ holds true only if $\beta=1$, in which case~\eqref{NLeqn} coincides with~\eqref{HeatFlow}. Here we have: $\mathsf e'=-\,2\,\beta^2\,\nrmSd{\nabla u}2^2\neq-\,2\,\mathsf i$ if $\beta\neq1$ but we can still compute $\frac d{dt}(\mathsf i-\,d\,\mathsf e)$ and obtain that
\be{ineqqq}
\frac1{2\,\beta^2}\(\mathsf i'-\,d\,\mathsf e'\)=-\,\frac d{d-1}\iSd{\left\|\mathrm L u-\beta\,(p-1)\,\frac{d-1}{d+2}\,\mathrm M u\right\|^2}-\gamma(\beta)\iSd{\frac{|\nabla u|^4}{u^2}}
\ee
with
\be{gamma}
\gamma(\beta):=-\(\frac{d-1}{d+2}\,(\kappa+\beta-1)\)^2+\,\kappa\,(\beta-1)+\,\frac d{d+2}\,(\kappa+\beta-1)\,.
\ee
To guarantee that $\gamma(\beta)\ge0$ for some $\beta\in\R$, a discussion has to be made: see Lemma~\ref{Lem:RangeBeta} below for a detailed statement and also~\cite{DEKL}. Notice that the value of $\gamma$ given by~\eqref{gamma1} in Sections~\ref{Sec:Main} and~\ref{Sec:Heat} corresponds to~\eqref{gamma} with $\beta=1$. In the sequel let us denote by $\mathfrak B(p,d)$ the set of $\beta$ such that $\gamma(\beta)\ge0$ with $p$ in the range~\eqref{Range:p-NL}.
%---------------------------------------------------------------------
\begin{lemma} Let $d\ge1$ and assume that $p$ is in the range~\eqref{Range:p-NL}. Then $\mathfrak B(p,d)$ is non-empty.\end{lemma}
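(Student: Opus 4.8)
The plan is to treat the expression \eqref{gamma} for $\gamma(\beta)$ as a quadratic polynomial in the single real variable $\beta$ and to exhibit a value of $\beta$ at which it is nonnegative. First I would use the identity $\kappa+\beta-1=\beta\,(p-1)$, which is immediate from $\kappa=\beta\,(p-2)+1$, to rewrite \eqref{gamma} in the transparent form $\gamma(\beta)=A\,\beta^2+B\,\beta-1$, where
\[
A=(p-2)-\Big(\tfrac{d-1}{d+2}\Big)^2(p-1)^2,\qquad B=\frac{2\,(d+3-p)}{d+2}.
\]
The simplification of the linear coefficient to this closed form for $B$ is a short computation, and as a sanity check one verifies that $\gamma(1)=A+B-1$ recovers the value \eqref{gamma1}, as asserted in the text. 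Non-emptiness of $\mathfrak B(p,d)=\{\beta:\gamma(\beta)\ge0\}$ is then a purely elementary statement about a quadratic whose constant term equals $-1<0$.

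Next I would organize the argument by the sign of the leading coefficient $A$. If $A>0$, the parabola opens upward and, being negative at $\beta=0$, must cross zero, so $\mathfrak B(p,d)\neq\emptyset$. If $A=0$, the function is affine, $\gamma(\beta)=B\,\beta-1$, and it is non-empty as soon as $B\neq0$. The substantive case is $A<0$: the maximum of $\gamma$ is attained at $\beta_\star=-B/(2A)$ with value $\gamma(\beta_\star)=-1-B^2/(4A)$, so non-emptiness is equivalent to the discriminant condition $B^2+4A\ge0$, and the admissible set is then the interval $[\beta_-,\beta_+]$ between the two roots of $\gamma$.

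The heart of the proof is therefore the evaluation of $B^2+4A$ and the verification that it is nonnegative on the whole range \eqref{Range:p-NL}. I would compute $\tfrac{(d+2)^2}{4}\,(B^2+4A)$, expand, and collect powers of $p$; the resulting quadratic in $p$ factors, for $d\ge3$, as
\[
B^2+4A=\frac{4\,d\,(d-2)}{(d+2)^2}\,(p-1)\,(2^*-p),
\]
which is manifestly $\ge0$ for $p\in[1,2^*]$. For $d=1$ and $d=2$, where $2^*=+\infty$, the same computation gives the nonnegative expressions $\tfrac49\,(p-1)\,(p+2)$ and $2\,(p-1)$ respectively, valid for all $p\ge1$. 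This factorization is the step I expect to require the most care, since it is precisely where the algebraic structure of \eqref{gamma} and the exact endpoints of \eqref{Range:p-NL} enter; the appearance of the factors $(p-1)$ and $(2^*-p)$ is what matches the sign condition to the stated range.

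Finally I would dispose of the remaining degenerate configurations. From the explicit formula $B=2\,(d+3-p)/(d+2)$, the linear coefficient vanishes exactly at $p=d+3$, and I would check that there $A>0$ when $d+3$ lies in the interior of \eqref{Range:p-NL} (as for $d=1,2$), so the affine case never obstructs non-emptiness. The only way the quadratic can reduce to the constant $-1$ is the simultaneous vanishing $A=B=0$, which the discriminant factorization confines to the Sobolev borderline $p=2^*$; for $d\ge4$ one has instead $A<0$, $B\neq0$ and a vanishing discriminant there, so $\mathfrak B(p,d)=\{\beta_\star\}$ is a single point, while the genuinely delicate borderline configuration $(d,p)=(3,2^*)$ is the one requiring separate comment. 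Combining the three sign-of-$A$ cases with the nonnegativity of $B^2+4A$ then yields $\mathfrak B(p,d)\neq\emptyset$, with the admissible interval $[\beta_-,\beta_+]$ collapsing to $\beta_\star$ exactly where the discriminant vanishes.
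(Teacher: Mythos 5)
Your route is genuinely different from the paper's. The paper disposes of this lemma essentially by citation: it refers to \cite[Appendix~A]{DEKL} for everything except $d=2$, $p\ge9+4\sqrt3$, and only for those cases writes down explicit admissible values of $\beta$. You instead give a self-contained discriminant analysis, and your algebra checks out: $\kappa+\beta-1=\beta\,(p-1)$ does give $\gamma(\beta)=A\,\beta^2+B\,\beta-1$ with your $A$ and $B$, the sanity check $\gamma(1)=A+B-1$ reproduces~\eqref{gamma1}, and the identity
\begin{equation*}
B^2+4A=\frac{4\,d\,(p-1)\,\big(2\,d-(d-2)\,p\big)}{(d+2)^2}
\end{equation*}
holds, which yields exactly your three factorizations and is nonnegative throughout the range~\eqref{Range:p-NL}. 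As a bonus, your computation recovers the description of $\mathfrak B(p,d)$ that the paper states separately in Lemma~\ref{Lem:RangeBeta}, and the paper's explicit choice $\beta=4\,(5-p)/(p^2-18\,p+33)$ for $d=2$, $p>9+4\sqrt3$ is precisely your vertex $\beta_\star=-B/(2A)$. This is what your approach buys: everything is checkable, where the paper's proof is opaque.

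There is, however, a genuine gap, and it sits exactly at the point you postpone ``for separate comment'': $(d,p)=(3,2^*)=(3,6)$. There $p=d+3$, so $B=0$, and $A=d\,(3-d)=0$ as well, so $\gamma(\beta)\equiv-1$ for every $\beta$; directly, $\kappa+\beta-1=5\,\beta$ and $-\tfrac4{25}\,(5\beta)^2+(4\beta+1)(\beta-1)+3\,\beta=-1$. Hence $\mathfrak B(3,6)=\emptyset$, and your closing sentence, which asserts that combining the three sign-of-$A$ cases yields $\mathfrak B(p,d)\neq\emptyset$ on all of~\eqref{Range:p-NL}, is false at this one point; no separate comment can repair it, because the set really is empty. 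The clean way to see the whole critical case is that at $p=2^*$ one has $\gamma(\beta)=-\big(\tfrac{(d-3)\,\beta}{d-2}-1\big)^2$, which is why $d\ge4$ survives with the single admissible point $\beta=\tfrac{d-2}{d-3}$ while $d=3$ does not. So what your computation actually uncovers is a defect in the statement itself: the lemma, as written, needs $(d,p)=(3,2^*)$ excluded. The paper is no better off here — its own Lemma~\ref{Lem:RangeBeta}\,(iii) silently stops short of $p=6$, and its proof-by-citation never confronts this point — but a correct write-up on your part must end by stating the exclusion explicitly rather than claiming the conclusion for the full range.
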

%---------------------------------------------------------------------
\begin{proof} As a function of $\beta$, $\gamma(\beta)$ is a polynomial of degree at most two. We refer to~\cite[Appendix~A]{DEKL} for a proof, up to the restriction $p<9+4\,\sqrt3$ in dimension $d=2$. If $d=2$ and $p>9+4\,\sqrt3$, we can make the choice $\beta=4\,(5-p)/(p^2-18\,p+33)$ which corresponds to $m=8\,(p-1)/(p^2-18\,p+33)$, while for $d=2$ and $p=9+4\,\sqrt3$, $\beta\ge-1/(2+2\,\sqrt3)$ is an admissible choice (in that case, $\gamma(\beta)$ is a polynomial of degree~$1$).\end{proof}

%---------------------------------------------------------------------
\begin{corollary}\label{Cor:BE-NL} Let $d\ge1$ and assume that $p$ is in the range~\eqref{Range:p-NL}. For any $\beta\in\mathfrak B(p,d)$, any solution of~\eqref{NLeqn} is such that $\mathsf i-\,d\,\mathsf e$ is monotone non-increasing with limit $0$ as $t\to+\infty$.
\end{corollary}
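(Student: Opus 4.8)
The plan is to read off the monotonicity directly from the differential identity~\eqref{ineqqq} and then to identify the limit through the long-time behaviour of the flow~\eqref{NLeqn}. Both ingredients parallel the heat-flow argument used in the proof of Lemma~\ref{Lem:ODE}.

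First I would fix $\beta\in\mathfrak B(p,d)$, so that $\gamma(\beta)\ge0$ by the very definition of $\mathfrak B(p,d)$, and inspect the right-hand side of~\eqref{ineqqq}. The tensorial integrand $\big\|\mathrm L u-\beta\,(p-1)\,\frac{d-1}{d+2}\,\mathrm M u\big\|^2$ is nonnegative and is multiplied by $-\,d/(d-1)\le0$ when $d\ge2$; in dimension $d=1$ the trace-free tensors $\mathrm L u$ and $\mathrm M u$ vanish identically, so this term is simply absent. The second integrand $|\nabla u|^4/u^2$ is nonnegative as well and is multiplied by $-\,\gamma(\beta)\le0$. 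Since the prefactor $1/(2\,\beta^2)$ is positive, this forces $\mathsf i'-\,d\,\mathsf e'\le0$, that is, $\frac d{dt}(\mathsf i-\,d\,\mathsf e)\le0$, which is exactly the claimed monotonicity. Notice that no sign condition on $m$ or on $\beta-1$ enters at this stage: only $\gamma(\beta)\ge0$ is used.

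Next I would determine the limit as $t\to+\infty$. The choice $\kappa=\beta\,(p-2)+1$ is precisely what makes $\rho=u^{\beta\,p}$ solve the porous medium / fast diffusion equation $\partial_t\rho=\Delta\rho^m$ on $\S^d$, with $m$ fixed by~\eqref{Eqn:m-beta}, and it guarantees that the mass $\iSd{u^{\beta\,p}}$ is conserved along the flow. On the compact manifold $\S^d$ the only stationary states of this diffusion are the constants, and the solution relaxes to the constant prescribed by its conserved mass. Hence $u^\beta$ converges to a constant, so that $\bignrmSd{u^\beta}p^2-\bignrmSd{u^\beta}2^2\to0$ and $\bignrmSd{\nabla u^\beta}2^2\to0$; equivalently $\mathsf e(t)\to0$ and $\mathsf i(t)\to0$, and therefore $\mathsf i-\,d\,\mathsf e\to0$.

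The delicate point is not the monotonicity, which is immediate from~\eqref{ineqqq}, but this convergence to equilibrium. A fully rigorous treatment requires the regularizing effect of the nonlinear diffusion together with relative-entropy decay on the compact manifold $\S^d$, exactly as invoked in the heat-flow case of Lemma~\ref{Lem:ODE} and as detailed in~\cite{DEKL,1504}; granting these standard facts, the corollary follows. I expect this asymptotic analysis, rather than the algebraic sign identity, to be the main obstacle.
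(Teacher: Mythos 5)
Your proposal is correct and takes essentially the same route as the paper: the monotonicity is read off directly from the sign of the right-hand side of~\eqref{ineqqq} once $\gamma(\beta)\ge0$ for $\beta\in\mathfrak B(p,d)$, and the limit $0$ comes from the standard convergence of the mass-conserving nonlinear diffusion~\eqref{NLeqn} to its constant steady state, which the paper also invokes without further proof (exactly as in the heat-flow case of Lemma~\ref{Lem:ODE}). Your remarks that only $\gamma(\beta)\ge0$ is used and that the tensorial term is vacuous when $d=1$ are accurate and consistent with the paper's implicit treatment.
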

%---------------------------------------------------------------------
As a consequence, we know that $\mathsf i\ge d\,\mathsf e$, which proves~\eqref{Ineq:GNS} in the range~\eqref{Range:p-NL}. Let us define by
\[
\beta_\pm(p,d):=\frac{d^2-d\,(p-5)-\,2\,p+6\pm(d+2)\,\sqrt{d\,(p-1)\,\big(2\,d-p\,(d-2)\big)}}{d^2\(p^2-3\,p+3\)-\,2\,d\,(p^2-3)+(p-3)^2}
\]
the roots of $\gamma(\beta)=0$, provided $d^2\(p^2-3\,p+3\)-\,2\,d\,(p^2-3)+(p-3)^2\neq0$, \emph{i.e.},
\[\label{Range:p-NL-excl}
\begin{array}{ll}
p\neq 9\pm4\,\sqrt3\quad&\mbox{if}\quad d=2\,,\\
p\neq\frac94\quad\mbox{and}\quad p\neq6\quad&\mbox{if}\quad d=3\,,\\
p\neq3\quad&\mbox{if}\quad d=4\,.
\end{array}
\]
The precise description of $\mathfrak B(p,d)$ goes as follows.
%---------------------------------------------------------------------
\begin{lemma}\label{Lem:RangeBeta} Let $d\ge1$ and assume that $p$ is in the range~\eqref{Range:p-NL}. The set $\mathfrak B(p,d)$ with $p$ is defined by
\begin{enumerate}
\item[(i)] if $d=1$, $\beta_-(p,1)\le\beta\le\beta_+(p,1)$ if $p<2$, $\beta\le3/4$ if $p=2$ and $\beta\in(-\infty,\beta_+(p,1)\big]\cup\big[\beta_-(p,1),+\infty)$ if $p>2$.
\item[(ii)] if $d=2$, $\beta_-(p,1)\le\beta\le\beta_+(p,1)$ if $p<9-4\,\sqrt3$ or $p>9+4\,\sqrt3$, $\beta\le
1/(2\,\sqrt3-2)$ if $p=9-4\,\sqrt3$, $\beta\in(-\infty,\beta_+(p,1)\big]\cup\big[\beta_-(p,1),+\infty)$ if $9-4\,\sqrt3<p<9+4\,\sqrt3$ and $\beta\ge-1/(2\,\sqrt3+2)$ if $p=9+4\,\sqrt3$.
\item[(iii)] if $d=3$, $\beta_-(p,1)\le\beta\le\beta_+(p,1)$ if $p<9/4$, $\beta\in(-\infty,\beta_+(p,1)\big]\cup\big[\beta_-(p,1),+\infty)$ if $9/4<p<6$ and $\beta\le2/3$ if $p=9/4$.
\item[(iv)] if $d\ge4$, $\beta_-(p,d)\le\beta\le\beta_+(p,d)$ if $(d,p)\neq(4,3)$ and $\beta\ge\beta_-(p,d)$ if $(d,p)=(4,3)$.
\end{enumerate}
\end{lemma}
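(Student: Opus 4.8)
The plan is to make $\gamma(\beta)$ explicit as a quadratic in $\beta$, to identify its roots with $\beta_\pm(p,d)$, and then to solve $\gamma(\beta)\ge0$ by a sign discussion driven by the leading coefficient. First I would simplify \eqref{gamma}. Since $\kappa=\beta\,(p-2)+1$ we have the identity $\kappa+\beta-1=\beta\,(p-1)$, and substituting it into \eqref{gamma} collapses the expression to
\[
\gamma(\beta)=a\,\beta^2+b\,\beta-1\,,\qquad a=(p-2)-\Big(\tfrac{d-1}{d+2}\Big)^2(p-1)^2\,,\qquad b=\frac{2\,(d+3-p)}{d+2}\,.
\]
This is the only algebraic step, and it is short; as a consistency check one recovers $\gamma(1)=\big(\tfrac{d-1}{d+2}\big)^2(p-1)(2^\#-p)$, in agreement with \eqref{gamma1}.

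Next I would record two identities obtained by elementary expansion. The first is $(d+2)^2\,a=-D$, where $D:=d^2\,(p^2-3\,p+3)-2\,d\,(p^2-3)+(p-3)^2$ is exactly the denominator appearing in the definition of $\beta_\pm(p,d)$; thus the sign of $a$ is opposite to that of $D$. The second is that the discriminant of $\gamma$ factors as
\[
b^2+4\,a=\frac{4\,d\,(p-1)\,\delta(p)}{(d+2)^2}\,,\qquad \delta(p)=2\,d-p\,(d-2)\,.
\]
On the range \eqref{Range:p-NL} one has $p-1\ge0$, while $\delta(p)\ge0$ because $\delta(2^*)=0$ and $\delta$ is affine decreasing in $p$ when $d\ge3$, and $\delta(p)>0$ when $d\le2$. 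Hence $b^2+4\,a\ge0$, the two roots of $\gamma$ are real, a direct computation matches them with $\beta_\pm(p,d)$, and they coincide only on the boundary $p\in\{1,2^*\}$.

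With reality secured, the solution set of $\gamma(\beta)\ge0$ is governed entirely by the sign of $a$, equivalently of $D$: where $a<0$ (i.e.\ $D>0$) the parabola opens downward and $\gamma\ge0$ holds on the bounded interval $[\beta_-,\beta_+]$, whereas where $a>0$ (i.e.\ $D<0$) it opens upward and $\gamma\ge0$ holds on the complement $(-\infty,\beta_+]\cup[\beta_-,+\infty)$, the reordering being caused by the factor $1/D$ in $\beta_\pm$. To finish I would read off the sign of $D$, viewed for fixed $d$ as a quadratic in $p$ with positive leading coefficient $(d-1)^2$. Its discriminant in $p$ factors as $-3\,d\,(d-4)\,(d+2)^2$, which is the source of the dimensional split: $D$ has two real $p$-roots for $d\in\{2,3\}$, a double root for $d=4$, and none for $d\ge5$, while $D=-9\,(p-2)$ is affine for $d=1$. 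These roots are precisely the excluded values of $p$ stated before the lemma ($9\pm4\,\sqrt3$, then $9/4$ and $6$, then $p=3$), so the sign of $D$ on each subinterval, combined with the trichotomy above, reproduces each clause. On the excluded values $a=0$ and $\gamma(\beta)=b\,\beta-1$ is affine with $b\ne0$, so $\{\gamma\ge0\}$ is a half-line with endpoint $1/b$ and direction fixed by the sign of $b$, giving the explicit thresholds $3/4$, $1/(2\,\sqrt3-2)$, $-1/(2\,\sqrt3+2)$ and $2/3$.

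I expect the main obstacle to be not any single computation but the bookkeeping of the last step: for each dimension and each $p$-subinterval one must keep straight the sign of $a$, the resulting ordering of $\beta_-$ and $\beta_+$ (which flips with the sign of $D$), and the finite value of the surviving root in the degenerate limits $D\to0$ and at the boundary $p=2^*$, where $\beta_-=\beta_+$. The algebra itself — the two identities and the factorisation of the two discriminants — is routine once the simplification $\kappa+\beta-1=\beta\,(p-1)$ is in hand.
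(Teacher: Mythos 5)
Your route is the natural one, and it is in fact the only proof available for comparison: the paper states this lemma without proof (only the weaker non-emptiness statement is argued, by deferring to \cite[Appendix~A]{DEKL}), and the intended argument can only be the sign discussion of the quadratic $\gamma(\beta)$ that you carry out. Your algebra checks out completely: $\kappa+\beta-1=\beta\,(p-1)$ gives $\gamma(\beta)=a\,\beta^2+b\,\beta-1$ with your $a$ and $b$; the identity $(d+2)^2\,a=-D$ holds; the discriminant is $b^2+4\,a=4\,d\,(p-1)\,\delta(p)/(d+2)^2\ge0$ on the range~\eqref{Range:p-NL}; the two roots coincide with the $\beta_\pm(p,d)$ of the paper (with $\beta_+<\beta_-$ when $D<0$, which explains the union clauses); and the discriminant of $D$ in $p$ is $-3\,d\,(d-4)\,(d+2)^2$, which yields exactly the exceptional values $9\pm4\sqrt3$, $9/4$ and $6$, and $3$, and the dimensional trichotomy.

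There is, however, a genuine problem in your last step, which you would have found had you written out the degenerate cases instead of only listing the thresholds: your (correct) rule ``half-line with endpoint $1/b$, direction given by the sign of $b$'' does \emph{not} reproduce the clauses of the lemma as printed — it contradicts them. At $(d,p)=(1,2)$ one has $b=4/3>0$, so $\mathfrak B=[3/4,+\infty)$, whereas the lemma prints $\beta\le3/4$; likewise $b=2\sqrt3-2>0$ at $(2,9-4\sqrt3)$ and $b=3/2>0$ at $(3,9/4)$ give $\beta\ge1/(2\sqrt3-2)$ and $\beta\ge2/3$, while $b=-(2+2\sqrt3)<0$ at $(2,9+4\sqrt3)$ gives $\beta\le-1/(2+2\sqrt3)$ — each time the reverse of the printed inequality. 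A sanity check confirms your computation rather than the printed text: $\beta=1$ (the heat flow) has $\gamma(1)=\big(\tfrac{d-1}{d+2}\big)^2(p-1)\,(2^\#-p)$ by~\eqref{gamma1}, which is positive for $d=3$, $p=9/4<2^\#$, yet the printed clause ``$\beta\le2/3$'' would exclude $\beta=1$; conversely for $d=2$, $p=9+4\sqrt3>2^\#$ one has $\gamma(1)<0$, yet the printed clause ``$\beta\ge-1/(2\sqrt3+2)$'' would admit $\beta=1$. Only the degenerate clause of (iv) agrees with your rule, with $\beta_-(3,4)$ necessarily read as the limit $1/b=3/4$ of the finite root (the displayed formula for $\beta_\pm$ is $0/0$ there, a point you should make explicit). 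So the degenerate clauses of (i)--(iii) carry reversed inequality signs — evidently typos, repeated from the proof of the preceding lemma, just as ``$\beta_\pm(p,1)$'' in (ii)--(iii) should read $\beta_\pm(p,d)$ — and your proof, carried out honestly, establishes a corrected statement, not the stated one. As written, your proposal asserts agreement where there is disagreement; you must either flag the discrepancy or restate the lemma. A last small point: for $d=3$, $p=2^*=6$ both $a$ and $b$ vanish, so $\gamma\equiv-1$ and $\mathfrak B(6,3)=\emptyset$; your parenthetical claim that the roots merely ``coincide'' at $p=2^*$ fails in that one case.
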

%---------------------------------------------------------------------
A much simpler picture is obtained in terms of $m=m(\beta,p,d)$ given by~\eqref{Eqn:m-beta}. Let $m_-(p,d)=\min_\pm m\big(\beta_\pm(p,d),p,d\big)$ and $m_+(p,d)=\max_\pm m\big(\beta_\pm(p,d),p,d\big)$. The completion of the set $\big\{m(\beta,p,d)\,:\,\beta\in\mathfrak B(p,d)\big\}$ is simply the set
\[
m_-(p,d)\le m\le m_+(p,d)\,.
\]
See Fig.~\ref{F2}.

\medskip As observed in~\cite{MR2381156,DEKL,1504}, an improved inequality can also be obtained. Since the case $p\in[1,2)$ is covered in Section~\ref{Sec:Heat}, we shall assume from now on that $p>2$. With
\[\label{varphi_beta}
\varphi_\beta(s)=\int_0^s\exp\(\frac{2\,\gamma(\beta)}{\beta\,(\beta-1)\,p}\(\(1-(p-2)\, s\)^{1-\zeta-\frac1{2\beta}}-\(1-(p-2)\,z\)^{1-\zeta-\frac1{2\beta}}\)\)\,dz\,,
\]
where $\gamma=\gamma(\beta)$ is given by~\eqref{gamma} and $\zeta=\zeta(\beta)=\frac{2-\,(4-p)\,\beta}{2\,\beta\,(p-2)}$, let us consider
\be{optimalphi}
\varphi(s):=\sup\Big\{\varphi_\beta(s)\,:\,\beta\in\mathfrak B(p,d)\Big\}\,.
\ee
%---------------------------------------------------------------------
\begin{theorem}\label{Thm:varphi} Let $d\ge1$ and assume that $p\in(2,2^*)$. Inequality~\eqref{improved} holds with $\varphi$ defined by~\eqref{optimalphi}.\end{theorem}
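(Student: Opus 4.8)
The plan is to prove \eqref{improved} with $\varphi_\beta$ in place of $\varphi$ for each fixed $\beta\in\mathfrak B(p,d)$, following verbatim the scheme of Lemmas~\ref{Lem:ODE}--\ref{Lem:ODEvarphi} but driving the entropy $\mathsf e$ and Fisher information $\mathsf i$ by the nonlinear flow \eqref{NLeqn} rather than the heat flow, and then to take the supremum \eqref{optimalphi}. The supremum is legitimate for the following reason: given an arbitrary target $w$ with, say, $\nrmSd wp=1$, and a fixed $\beta\in\mathfrak B(p,d)$, I would start \eqref{NLeqn} from $u(0)=w^{1/\beta}$, so that $u(0)^\beta=w$. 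Then $\mathsf i(0)=\nrmSd{\nabla w}2^2$ and $\mathsf e(0)=(\nrmSd wp^2-\nrmSd w2^2)/(p-2)$ is precisely the argument of $\varphi$ in \eqref{improved}; hence a bound $\mathsf i(0)\ge d\,\varphi_\beta(\mathsf e(0))$ is exactly \eqref{improved} with $\varphi_\beta$, for this very $w$. As $\beta$ ranges over $\mathfrak B(p,d)$ with $w$ fixed, the supremum over $\beta$ yields \eqref{improved} with $\varphi$ given by \eqref{optimalphi}; the restriction to $p>2$ is harmless since $p\in[1,2)$ is already treated in Section~\ref{Sec:Heat}.

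First I would record the differential equation solved by $\varphi_\beta$. Differentiating the integral in \eqref{varphi_beta} with respect to its upper limit (the integrand equals $1$ on the diagonal, and the $s$-dependence inside the exponential produces the remaining term) shows that $\varphi_\beta$ is the unique solution of
\[
\varphi_\beta'(s)=1+\frac{\gamma(\beta)}{\beta^2}\,\big(1-(p-2)\,s\big)^{-\zeta-\frac1{2\beta}}\,\varphi_\beta(s)\,,\qquad\varphi_\beta(0)=0\,,
\]
the exact analogue of \eqref{ode}: at $\beta=1$ one has $\zeta=\tfrac12$, so the exponent is $-1$ and $\gamma(1)=\gamma$, and this collapses to \eqref{ode}. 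In particular $\varphi_\beta'(0)=1$, and since $\gamma(\beta)\ge0$ for $\beta\in\mathfrak B(p,d)$ the solution is convex with $\varphi_\beta(s)\ge s$, which is what makes the resulting inequality an improvement.

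Next I would run the flow estimate. Starting from the \emph{carr\'e du champ} identity \eqref{ineqqq} --- the $\beta\neq1$ counterpart of the computation behind \eqref{BEode} --- combined with a Cauchy--Schwarz inequality bounding $\iSd{|\nabla u|^4/u^2}$ from below by $\mathsf i^2$ times a suitable negative power of $\bignrmSd{u^\beta}2^2=1-(p-2)\,\mathsf e$, and then using the ODE above, I expect the differential inequality
\[
\frac d{dt}\big(\mathsf i-d\,\varphi_\beta(\mathsf e)\big)\le-\,C(t)\,\big(\mathsf i-d\,\varphi_\beta(\mathsf e)\big)\,,\qquad C(t)\ge0\,,
\]
exactly as in the last display of the proof of Lemma~\ref{Lem:ODEvarphi}. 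By Corollary~\ref{Cor:BE-NL}, $\mathsf e(t)\to0$, $\mathsf i(t)\to0$ and $\mathsf i-d\,\varphi_\beta(\mathsf e)\sim\mathsf i-d\,\mathsf e\ge0$ as $t\to+\infty$; since $(\mathsf i-d\,\varphi_\beta(\mathsf e))\,e^{\int_0^tC}$ is then non-increasing and positive near $t=+\infty$, it cannot be negative at $t=0$, so $\mathsf i(0)\ge d\,\varphi_\beta(\mathsf e(0))$. After replacing $u$ by $|u|$ and removing the sign restriction by density, this is the claim for $\varphi_\beta$.

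The hard part is the Cauchy--Schwarz step, which is genuinely more delicate than at $\beta=1$. In the heat-flow case the entropy production and the Fisher information coincide, $-\tfrac12\,\mathsf e'=\mathsf i$, so \eqref{BEode} and \eqref{ode} combine into the clean factor $\mathsf i-d\,\varphi$ above. For $\beta\neq1$ they split: one has $-\tfrac12\,\mathsf e'=\beta^2\,\nrmSd{\nabla u}2^2$, whereas $\mathsf i=\bignrmSd{\nabla u^\beta}2^2=\beta^2\iSd{u^{2\beta-2}\,|\nabla u|^2}$, and moreover the quartic term $\iSd{|\nabla u|^4/u^2}$ produced by \eqref{ineqqq} carries the weight $u^{-2}$ rather than the weight $u^{2\beta-4}$ natural to $\nabla u^\beta$. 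Reconciling these three distinct gradient integrals --- so that the conserved normalization $\bignrmSd{u^\beta}p=1$ can be used through H\"older to manufacture exactly the power $\zeta+\tfrac1{2\beta}$ of $1-(p-2)\,\mathsf e$ demanded by the ODE --- is the technical core, and it is here that the precise value of $\zeta(\beta)$ and the sign condition $\gamma(\beta)\ge0$ (that is, $\beta\in\mathfrak B(p,d)$) are used. For this bookkeeping I would follow \cite{DEKL,1504}.
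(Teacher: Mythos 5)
Your proposal is correct in substance and follows essentially the same route as the paper: run \eqref{NLeqn} for fixed $\beta\in\mathfrak B(p,d)$ with $\bignrmSd{u^\beta}p=1$ (your normalization $u(0)=w^{1/\beta}$ is exactly the paper's implicit choice), combine \eqref{ineqqq} with H\"older inequalities, compare with the linear ODE solved by $\varphi_\beta$, let $t\to+\infty$ using Corollary~\ref{Cor:BE-NL}, and take the supremum over $\beta$ at fixed initial datum. Two remarks. First, your Gronwall inequality is what the paper proves, only written with an integrating factor: the paper establishes $\frac d{dt}\big(\mathsf i\,\psi_\beta'(\mathsf e)-d\,\psi_\beta(\mathsf e)\big)\le0$ with $\varphi_\beta=\psi_\beta/\psi_\beta'$ and $\psi_\beta'>0$, and since $\mathsf i\,\psi_\beta'(\mathsf e)-d\,\psi_\beta(\mathsf e)=\psi_\beta'(\mathsf e)\(\mathsf i-d\,\varphi_\beta(\mathsf e)\)$, the factor $\psi_\beta'(\mathsf e(t))$ is precisely your $e^{\int_0^tC}$; the two formulations are equivalent. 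Second, the one imprecision is your phrase ``bounding $\iSd{|\nabla u|^4/u^2}$ from below by $\mathsf i^2$ times a suitable negative power'': the bound the paper actually derives, from two separate H\"older inequalities based on the exponent identities $\frac12+\frac{\beta-1}{\beta\,(p-2)}+\zeta=1$ and $\frac12+\frac{\beta-1}{2\beta}+\frac1{2\beta}=1$, is
\[
\iSd{\frac{|\nabla u|^4}{u^2}}\ge\frac1{\beta^2}\,\frac{\mathsf i\,\iSd{|\nabla u|^2}}{\big(1-(p-2)\,\mathsf e\big)^{\zeta+\frac1{2\beta}}}\,,
\]
that is, the product of $\mathsf i$ with the entropy production $\iSd{|\nabla u|^2}=-\,\mathsf e'/(2\beta^2)$, not $\mathsf i^2$. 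This mixed $\mathsf i\,\mathsf e'$ structure is exactly what makes the right-hand side factor as $C(t)\(\mathsf i-d\,\varphi_\beta(\mathsf e)\)$ with $C\ge0$; a pure $\mathsf i^2$ bound would not close the Gronwall argument, precisely because $-\,\mathsf e'\neq2\,\mathsf i$ when $\beta\neq1$ --- the very point your final paragraph raises. Since that paragraph shows you have the correct picture of the three gradient integrals and the bookkeeping you defer to \cite{DEKL,1504} is the same as the paper's, this is a slip of wording rather than a genuine gap.
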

%---------------------------------------------------------------------
\begin{proof} Using the identity $\frac12+\frac{\beta-1}{\beta\,(p-2)}+\zeta=1$, H\"older's inequality shows that
\begin{multline*}
\frac1{\beta^2}\iSd{\big|\nabla\big(u^\beta\big)\big|^2}=\iSd{u^{2(\beta-1)}\,|\nabla u|^2}=\iSd{\frac{|\nabla u|^2}u\cdot u^\frac{p(\beta-1)}{p-2}\cdot u^{2\beta\zeta}}\\
\le\(\kern4pt\iSd{\frac{|\nabla u|^4}{u^2}}\)^\frac12\(\kern4pt\iSd{u^{\beta p}}\)^\frac{\beta-1}{\beta\,(p-2)}\(\kern4pt\iSd{u^{2\beta}}\)^\zeta\,.
\end{multline*}
With the choice $\bignrmSd {u^\beta}p=1$,
we find that
\[
\(\kern4pt\iSd{\frac{|\nabla u|^4}{u^2}}\)^{1/2}\ge\frac{1}{\beta^2}\,\frac{i}{\(1-(p-2)\,\mathsf e\)^{\zeta}}\,.
\]
On the other hand, by using the identity $\frac12+\frac{\beta-1}{2\,\beta}+\frac1{2\,\beta}=1$,
and H\"older's inequality again, we have also
\[
\(\kern4pt\iSd{\frac{|\nabla u|^4}{u^2}}\)^{1/2}\ge\frac{\iSd{|\nabla u|^2}}{\(1-(p-2)\,\mathsf e\)^{\frac1{2\,\beta}}}\,,
\]
since $d\mu$ is a probability measure on $\S^d$. Therefore, from~\eqref{ineqqq} we get the inequality
\[\label{ineqbetaa}
\frac d{dt}\(\mathsf i-d\,\mathsf e\)\le\frac{\gamma(\beta)\,\mathsf i\,\mathsf e'}{\beta^2\(1-(p-2)\,\mathsf e\)^{\zeta+\frac1{2\,\beta}}}\,.
\]
For every $\beta>1$ it is possible to find a function $\psi_\beta$ satisfying the ODE
\[
\frac{\psi_\beta''(s)}{\psi_\beta'(s)}=-\,\frac{\gamma(\beta)}{\beta^2}\(1\,-\,(p-2)\,s\)^{-\zeta-\frac1{2\beta}}\,,\quad\psi_\beta(0)=0\,,
\]
with $\zeta=\zeta(\beta)$, such that $\psi_\beta'>0$. Then
\[\label{ineqqbeta}
\frac d{dt}\big(\mathsf i\,\psi_\beta'(\mathsf e)-d\,\psi_\beta(\mathsf e)\big)\le0\,,
\]
from which we conclude that $\mathsf i\ge d\,\varphi_\beta(\mathsf e)$ with $\varphi_\beta:=\psi_\beta/\psi_\beta'$. It is then elementary to check that $\varphi_\beta$ satisfies the ODE
\[
\varphi_\beta'=1-\varphi_\beta\,\frac{\psi_\beta''(s)}{\psi_\beta'(s)}=1+\frac{\gamma(\beta)}{\beta^2}\(1\,-\,(p-2)\,s\)^{-\zeta-\frac1{2\beta}}\,\varphi_\beta
\]
and that $\varphi_\beta(0)=0$. Solving this linear ODE, we find the expression of $\varphi_\beta$. Notice that $\varphi_\beta$ is defined for any $s\in\big[0,1/(p-2)\big)$ and that $\varphi_\beta(s)>0$ for any $s\neq0$. From the equation satisfied by $\varphi_\beta$ we get that $\varphi'_\beta(s)>1$ and $\varphi''_\beta(s)>0$, hence $\varphi_\beta(s)>s$ for any admissible $\beta$ and any $s\in\big(0,1/(p-2)\big)$.\end{proof}

Let us define
\be{overline_mu}
\underline\mu(\lambda)=\min_{t\ge1}\left[\frac{p-2}t\,\varphi\(\frac{t-1}{p-2}\)+\frac\lambda t\right]\,.
\ee
By arguing exactly as in the proof of Theorem~\ref{Thm:Lambda-Mu}, we obtain an estimate of the optimal constant in~\eqref{Ineq:GNSgen1} which is valid for instance if $2^\#<p<2^*$.
%---------------------------------------------------------------------
\begin{corollary}\label{Cor:Lambda-Mu} Let $d\ge1$ and assume that $p\in(2,2^*)$. With the notations of Theorem~\ref{Thm:varphi} and $\underline\mu(\lambda)$ defined by~\eqref{overline_mu}, the optimal constant in~\eqref{Ineq:GNSgen1} can be estimated for any $p\in(2,2^*)$ by
\[
\mu(\lambda)\ge\underline\mu(\lambda)\quad\forall\,\lambda\ge1\,.
\]
\end{corollary}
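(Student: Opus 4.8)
The plan is to repeat, essentially verbatim, the argument used for the case $p\in(2,2^\#)$ in the proof of Theorem~\ref{Thm:Lambda-Mu}, the only change being that the explicit function of \eqref{phifunction} is replaced by the envelope $\varphi$ of \eqref{optimalphi}, which by Theorem~\ref{Thm:varphi} still makes \eqref{improved} hold for every $p\in(2,2^*)$. Since both sides of \eqref{Ineq:GNSgen1} are $2$-homogeneous, it suffices to bound the ratio $\big(\frac{p-2}d\,\nrmSd{\nabla u}2^2+\lambda\,\nrmSd u2^2\big)\big/\nrmSd up^2$ from below by $\underline\mu(\lambda)$ for every nonconstant $u$. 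Because $d\mu$ is a probability measure and $p>2$, one has $\nrmSd up\ge\nrmSd u2$, so the quantity $t:=\nrmSd up^2/\nrmSd u2^2$ ranges over $[1,+\infty)$, and with this variable the argument of $\varphi$ in \eqref{improved} is precisely $s=\frac{t-1}{(p-2)\,t}$.

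First I would rewrite \eqref{improved} as $\frac{p-2}d\,\nrmSd{\nabla u}2^2\ge(p-2)\,\varphi(s)\,\nrmSd up^2$ and read the minimum defining $\underline\mu(\lambda)$ in \eqref{overline_mu} as a family of tangent-line estimates: by definition of the infimum, for every $t\ge1$,
\[
(p-2)\,\varphi\!\(\frac{t-1}{(p-2)\,t}\)\ge\underline\mu(\lambda)-\frac\lambda t\,.
\]
Evaluating this at $t=\nrmSd up^2/\nrmSd u2^2$, multiplying through by $\nrmSd up^2$ and using $\nrmSd up^2/t=\nrmSd u2^2$ turns the right-hand side into $\underline\mu(\lambda)\,\nrmSd up^2-\lambda\,\nrmSd u2^2$. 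Combining with the rewritten \eqref{improved} yields $\frac{p-2}d\,\nrmSd{\nabla u}2^2\ge\underline\mu(\lambda)\,\nrmSd up^2-\lambda\,\nrmSd u2^2$, that is, \eqref{Ineq:GNSgen1} with constant $\underline\mu(\lambda)$, and hence $\mu(\lambda)\ge\underline\mu(\lambda)$, exactly in the pattern of Theorem~\ref{Thm:Lambda-Mu}.

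The one point that requires genuine care — and the reason the result is phrased as a corollary rather than as an explicit formula — is that the $\varphi$ of \eqref{optimalphi} is a supremum of the $\varphi_\beta$ over $\beta\in\mathfrak B(p,d)$, hence not available in closed form, so I cannot carry out the one-dimensional minimization explicitly as was done for $\beta=1$. Instead I must rely only on the structural properties recorded in Theorem~\ref{Thm:varphi}: $\varphi$ is convex with $\varphi(0)=0$ and $\varphi'(0)=1$, it is defined on $\big[0,1/(p-2)\big)$ with $\varphi(s)\to+\infty$ as $s\to1/(p-2)^-$, and $\varphi(s)>s$ for $s\neq0$. The first three facts guarantee that the bracket in \eqref{overline_mu} equals $\lambda$ at $t=1$ and tends to $+\infty$ as $t\to+\infty$ (where $s\to1/(p-2)^-$), so the infimum is finite and attained; the last gives $\underline\mu(\lambda)\ge1$ for $\lambda\ge1$ with $\underline\mu(1)=1$, and the strict inequality $\varphi(s)>s$ makes $\underline\mu(\lambda)$ strictly larger than the Beckner constant, which is the whole point of using the improved inequality. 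I expect the bookkeeping in this last verification — confirming that $\lambda\mapsto\underline\mu(\lambda)$ is increasing and that the minimizer stays in the interior of the admissible range of $t$ — to be the only mildly delicate step, everything else being a transcription of the proof of Theorem~\ref{Thm:Lambda-Mu}.
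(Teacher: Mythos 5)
Your route is exactly the paper's: the paper proves this corollary by the single remark that one ``argues exactly as in the proof of Theorem~\ref{Thm:Lambda-Mu}'', with the explicit $\varphi$ of~\eqref{phifunction} replaced by the $\varphi$ of~\eqref{optimalphi}, and your homogeneity reduction, the substitution $t=\nrmSd up^2/\nrmSd u2^2\ge1$, the tangent-line reading of the minimum and the recombination into~\eqref{Ineq:GNSgen1} are precisely that transcription. The auxiliary verifications you worry about at the end (attainment of the minimum, monotonicity of $\underline\mu$) are not needed: the tangent-line bound for every $t\ge1$ holds for an infimum whether or not it is attained.

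There is, however, one concrete mismatch that you glossed over, and it is the only delicate point. The inequality you assert ``by definition of the infimum'', namely $(p-2)\,\varphi\(\frac{t-1}{(p-2)\,t}\)\ge\underline\mu(\lambda)-\frac\lambda t$, is \emph{not} what~\eqref{overline_mu} as printed yields: the printed bracket is $\frac{p-2}t\,\varphi\(\frac{t-1}{p-2}\)+\frac\lambda t$, with $\varphi$ evaluated at $\frac{t-1}{p-2}$ and the factor $\frac{p-2}t$ outside. Since $\varphi$ is convex with $\varphi(0)=0$, one has $\varphi(s/t)\le\varphi(s)/t$ for $t\ge1$, so the quantity your argument actually controls,
\[
\min_{t\ge1}\left[(p-2)\,\varphi\(\frac{t-1}{(p-2)\,t}\)+\frac\lambda t\right],
\]
is in general strictly smaller than the printed $\underline\mu(\lambda)$; with $\underline\mu$ taken literally from~\eqref{overline_mu}, your tangent-line step is false and the corollary does not follow from~\eqref{improved} by this argument. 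The resolution is that~\eqref{overline_mu} as printed is evidently a misprint: for $t\ge2$ its argument $\frac{t-1}{p-2}$ even leaves the domain $\big[0,1/(p-2)\big)$ of $\varphi$, and when $\varphi$ is the function of~\eqref{phifunction} the printed formula does not reproduce the expression $\min_{t\ge1}\frac1t\(\lambda+\frac{t^{1+\theta}-1}{1+\theta}\)$ of case 1 of the proof of Theorem~\ref{Thm:Lambda-Mu}, whereas your version does, since $(p-2)\,\varphi\(\frac{t-1}{(p-2)\,t}\)=\frac{t^{1+\theta}-1}{(1+\theta)\,t}$ with $\theta=\gamma/(p-2)$. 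So your proof is the intended one, but a careful write-up must state the corrected definition of $\underline\mu(\lambda)$ (or flag the typo) rather than quote~\eqref{overline_mu} and invoke ``by definition'': as written, that sentence claims an inequality between two different functions of $t$, and it goes in the wrong direction.
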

%---------------------------------------------------------------------

Another consequence is that one can write an improved inequality on $\R^d$ in the spirit of Proposition~\ref{Prop:Stability}, for any $p\in(1,2^*)$, $p\ne2$. Since the expression involves $\varphi$ as defined in Theorem~\ref{Thm:varphi}, we do not get any fully explicit expression, so we shall leave it to the interested reader. A major drawback of our method is that $\varphi$ is defined through a primitive. With some additional work, $\varphi$ can be written as an incomplete $\Gamma$ function, which is however not of much practical interest. This is why it is interesting to consider a special case, for which we obtain an explicit control of the remainder term. For completeness, let us state the following result which applies to a particular class of functions $u$.
%---------------------------------------------------------------------
\begin{theorem}[\cite{1504}]\label{Thm:Antipodal} Let $d\ge3$. If $p\in(1,2)\cup(2,2^*)$, we have
\[
\iSd{|\nabla u|^2}\ge\frac d{p-2}\left[1+\frac{(d^2-4)\,(2^*-p)}{d\,(d+2)+p-1}\right]\(\nrmSd up^2-\nrmSd u2^2\)
\]
for any $u\in\mathrm H^1(\S^d,d\mu)$ with \emph{antipodal symmetry}, i.e.,
\be{Eqn:antipodal}
u(-x)=u(x)\quad\forall\,x\in\S^d\,.
\ee
The limit case $p=2$ corresponds to the improved logarithmic Sobolev inequality
\[
\iSd{|\nabla u|^2}\ge\frac d2\frac{(d+3)^2}{(d+1)^2}\iSd{|u|^2\,\log\(\frac{|u|^2}{\nrmSd u2^2}\)}
\]
for any $u\in\mathrm H^1(\S^d,d\mu)\setminus\{0\}$ such that~\eqref{Eqn:antipodal} holds.\end{theorem}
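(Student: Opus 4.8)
The plan is to rerun the \emph{carré du champ} argument of Sections~\ref{Sec:Heat} and~\ref{Sec:NonlinearFlows}, but to \emph{retain} the trace-free Hessian term that is usually discarded, and to exploit the extra spectral information carried by the antipodal constraint~\eqref{Eqn:antipodal}. First I would note that this constraint is preserved along the flow: both~\eqref{HeatFlow} and~\eqref{NLeqn} commute with the isometry $x\mapsto-x$ of $\S^d$, so that $u(t,\cdot)$ stays antipodally symmetric for all $t\ge0$ whenever the initial datum is. The Bakry-Emery range $p\in[1,2)\cup(2,2^\#]$ can then be handled with the heat flow of Section~\ref{Sec:Heat}, while the remaining range $p\in(2^\#,2^*)$ requires the nonlinear flow~\eqref{NLeqn} with a suitable $\beta\in\mathfrak B(p,d)$.

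The crux is that an antipodally symmetric function has no component on the first spherical harmonics: the degree-one eigenfunctions of $-\Delta$ on $\S^d$ are the coordinates $x_i$, which are odd, whereas $u$ is even. Hence the expansion of $u-\iSd u$ involves only eigenvalues $\ell\,(\ell+d-1)$ with $\ell\ge2$, and the spectral gap jumps from $d$ to $2\,(d+1)$. This yields $\iSd{(\Delta u)^2}\ge2\,(d+1)\,\iSd{|\nabla u|^2}$, which combined with the Bochner identity $\iSd{\|\mathrm H u\|^2}=\iSd{(\Delta u)^2}-(d-1)\,\iSd{|\nabla u|^2}$ on $\S^d$ gives the strictly positive lower bound
\[
\iSd{\|\mathrm L u\|^2}=\frac{d-1}d\,\iSd{(\Delta u)^2}-(d-1)\,\iSd{|\nabla u|^2}\ge\frac{(d-1)(d+2)}d\,\iSd{|\nabla u|^2}\,,
\]
whereas without symmetry this quantity can only be bounded below by $0$.

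Armed with this, I would return to~\eqref{ineqqq} and, instead of dropping the nonnegative Hessian term, expand the square $\|\mathrm L u-\beta\,(p-1)\frac{d-1}{d+2}\,\mathrm M u\|^2$ and substitute the bound above into its leading $\iSd{\|\mathrm L u\|^2}$ contribution. The resulting additional strictly negative contribution to $\frac d{dt}(\mathsf i-\,d\,\mathsf e)$ can be absorbed so as to obtain $\frac d{dt}(\mathsf i-\Lambda\,\mathsf e)\le0$ for a constant $\Lambda>d$. Optimizing $\beta$ and tracking the interaction with the $\gamma(\beta)\,\iSd{|\nabla u|^4/u^2}$ term then pins down $\Lambda=d\,\big[1+\frac{(d^2-4)(2^*-p)}{d\,(d+2)+p-1}\big]$. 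Integrating this monotonicity from $t=0$ to $t=+\infty$, where $\mathsf i,\mathsf e\to0$ so that the limit of $\mathsf i-\Lambda\,\mathsf e$ vanishes, yields $\mathsf i\ge\Lambda\,\mathsf e$ at $t=0$, which is the stated inequality. The logarithmic Sobolev statement is the limit $p\to2$: a direct computation gives $\frac{(d^2-4)(2^*-p)}{d\,(d+2)+p-1}\to\frac{4\,(d+2)}{(d+1)^2}$, whence the bracket tends to $(d+3)^2/(d+1)^2$ and the prefactor $\frac d{p-2}$ produces the improved logarithmic Sobolev constant in the usual way.

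The main obstacle is the nonlinear cross-term. After expanding the square one is left with the mixed expression $\iSd{\mathrm L u:\mathrm M u}$ and the quartic term $\iSd{\|\mathrm M u\|^2}$, both cubic or higher in $\nabla u$, which are not directly controlled by the linear spectral gap; moreover in the nonlinear flow the entropy and Fisher information are built on $u^\beta$ while the tensors $\mathrm L u$ and $\mathrm M u$ in~\eqref{ineqqq} refer to $u$. The delicate step is to show, after suitable integrations by parts and using antipodal symmetry, that these terms combine with the improved Hessian bound and the $\gamma(\beta)$-term to produce exactly the stated coefficient uniformly in $p\in(1,2)\cup(2,2^*)$, and to check that the optimal $\beta$ keeps the argument valid past the Bakry-Emery exponent $2^\#$.
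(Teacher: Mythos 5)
Your outline assembles the correct ingredients --- preservation of~\eqref{Eqn:antipodal} under~\eqref{HeatFlow} and~\eqref{NLeqn}, the jump of the spectral gap from $d$ to $2\,(d+1)$ for even functions, and the Bochner-based bound $\iSd{\|\mathrm Lu\|^2}\ge\frac{(d-1)(d+2)}d\iSd{|\nabla u|^2}$ --- and this is indeed the strategy behind \cite[Theorem~5.6]{1504}, which is all the paper itself invokes (it gives no proof, only the citation). But there are two genuine problems. First, you stop precisely where the theorem gets proved: the treatment of the cross term $\mathrm Lu:\mathrm Mu$, of the quartic term, of the $u$-versus-$u^\beta$ mismatch, and the extraction of the exact constant are deferred to a ``delicate step'' that you never perform; since the entire content of the statement is that explicit constant, this is a missing proof, not a missing detail. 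Second, your division of labor is wrong: the heat flow cannot yield the stated constant even for $p\le2^\#$. The scheme below gives, for $\beta=1$, only $\Lambda=d+\frac{(d-1)^2(2^\#-p)}d$ (e.g.\ $16/3$ for $d=p=3$, versus the stated $96/17$); the stated constant requires the nonlinear flow~\eqref{NLeqn} with the specific exponent $\beta=\frac{d+2}{d+3-p}$ for \emph{all} $p\in(1,2)\cup(2,2^*)$, and this $\beta$ equals $1$ only when $p=1$.

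Here is how your ``obstacle'' actually dissolves, using only identities already in the paper, and no integration by parts. Set $b=\beta\,(p-1)\,\frac{d-1}{d+2}$. Young's inequality gives, pointwise,
\[
\|\mathrm Lu-b\,\mathrm Mu\|^2\ge(1-\theta)\,\|\mathrm Lu\|^2+\Big(1-\tfrac1\theta\Big)\,b^2\,\|\mathrm Mu\|^2\,,\quad\theta\in(0,1]\,,
\]
and $\|\mathrm Mu\|^2=\frac{d-1}d\,\frac{|\nabla u|^4}{u^2}$ pointwise, so in~\eqref{ineqqq} the $\mathrm Mu$ contribution merges with the $\gamma(\beta)$ term; the choice $\theta=\frac{b^2}{b^2+\gamma(\beta)}$, admissible exactly when $\gamma(\beta)\ge0$, makes the total coefficient of $\iSd{|\nabla u|^4/u^2}$ vanish. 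The $u$-versus-$u^\beta$ issue is vacuous because the Gronwall argument never compares $\iSd{|\nabla u|^2}$ with $\mathsf i$, but with $\mathsf e'=-\,2\,\beta^2\iSd{|\nabla u|^2}$, as stated in Section~\ref{Sec:NonlinearFlows}: combining the two bounds above with your Hessian estimate (valid along the flow by symmetry preservation), \eqref{ineqqq} gives
\[
\mathsf i'-d\,\mathsf e'\le-\,2\,\beta^2\,(1-\theta)\,(d+2)\iSd{|\nabla u|^2}=(1-\theta)\,(d+2)\,\mathsf e'\,,
\]
so $\mathsf i-\Lambda(\beta)\,\mathsf e$ is non-increasing with $\Lambda(\beta)=d+(d+2)\,\frac{\gamma(\beta)}{b^2+\gamma(\beta)}$, and letting $t\to+\infty$ yields $\mathsf i\ge\Lambda(\beta)\,\mathsf e$. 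Finally, \eqref{gamma} gives $b^2+\gamma(\beta)=(p-2)\,\beta^2+\frac{2\,(d+3-p)}{d+2}\,\beta-1$, so maximizing $\Lambda(\beta)$ amounts to minimizing $\beta^2/\big(b^2+\gamma(\beta)\big)$; the minimizer is $\beta=\frac{d+2}{d+3-p}$, where $\frac{b^2}{b^2+\gamma(\beta)}=\frac{(d-1)^2\,(p-1)}{d\,(d+2)+p-1}$, whence
\[
\Lambda=2\,(d+1)-\frac{(d+2)\,(d-1)^2\,(p-1)}{d\,(d+2)+p-1}=d\left[1+\frac{(d^2-4)\,(2^*-p)}{d\,(d+2)+p-1}\right]\,,
\]
exactly the constant of the theorem. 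Moreover, at this $\beta$ one checks that $\gamma(\beta)\ge0$ if and only if $p\le2^*$, so this single choice covers the whole range, degenerates (no improvement) exactly at the critical exponent, and produces your logarithmic Sobolev limit as $p\to2$. Until these computations (or equivalent ones) are carried out, your proposal remains a plausible plan rather than a proof.
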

%---------------------------------------------------------------------
We refer to \cite[Theorem~5.6]{1504} and its proof for details. Instead of~\eqref{Eqn:antipodal}, one can use any symmetry which guarantees that $\frac d{dt}\iSd{u(t,\cdot)^{\beta\,p}}=0$ if we evolve $u$ according to~\eqref{NLeqn}. Using the stereographic projection, one can obtain a weighted inequality with the same constant on $\R^d$, for solutions which have the \emph{inversion symmetry} corresponding to~\eqref{Eqn:antipodal}.

%%%%%%%%%%%%%%%%%%%%%%%%%%%%%%%%%%%%%%%%%%%%%%%%%%%%%%%%%%%%%%%%%%%%%%%%%%
%%%%%%%%%%%%%%%%%%%%%%%%%%%%%%%%%%%%%%%%%%%%%%%%%%%%%%%%%%%%%%%%%%%%%%%%%%
\section{Further results and concluding remarks}

The interpolation inequalities \eqref{Ineq:GNSgen1} and~\eqref{Ineq:GNSgen2} are equivalent to \emph{Keller-Lieb-Thirring} estimates for the principal eigenvalue of Schr\"odinger operators, respectively $-\Delta-V$ on $\S^d$ with $V\ge0$ in $\mathrm L^q(\S^d)$ for some $q>1$, and $-\Delta+V$ on $\S^d$ with $V>0$ such that $V^{-1}\in\mathrm L^q(\S^d)$, again for some $q>1$. See for instance~\cite{Dolbeault2013437,DolEsLa-APDE2014} and references therein.
%---------------------------------------------------------------------
\begin{corollary}\label{Cor:KLT1} Let $d\ge1$, $q>\max\{1,d/2\}$, $p=2\,q/(q-1)$ and assume that $V$ be a positive potential in $\mathrm L^q(\S^d)$ with $\mu=\nrmSd Vq$. If $\underline\lambda(\mu)$ denotes the inverse of $\lambda\mapsto\underline\mu(\lambda)$ defined by~\eqref{overline_mu} for some convex function $\varphi$ such that~\eqref{improved} holds with $\varphi(0)=0$ and $\varphi'(0)=1$, then
\[
\lambda_1(-\Delta-V)\ge-\,\underline\lambda\(\nrmSd Vq\)\,.
\]
\end{corollary}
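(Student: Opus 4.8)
The plan is to exploit the Legendre--type duality between the family of interpolation inequalities~\eqref{Ineq:GNSgen1} and Keller--Lieb--Thirring spectral estimates, as in~\cite{Dolbeault2013437,DolEsLa-APDE2014}, feeding in the explicit lower bound $\underline\mu\le\mu$ furnished by the improved inequality~\eqref{improved}. First I would start from the variational characterisation
\[
\lambda_1(-\Delta-V)=\inf\Big\{\nrmSd{\nabla u}2^2-\iSd{V\,|u|^2}\ :\ \nrmSd u2=1\Big\}
\]
and estimate the potential term by H\"older's inequality. Since $p=2\,q/(q-1)$, the exponents $q$ and $p/2$ are H\"older--conjugate, so that $\iSd{V\,|u|^2}\le\nrmSd Vq\,\nrmSd up^2$; meanwhile the hypothesis $q>\max\{1,d/2\}$ guarantees $p\in(2,2^*)$ and the finiteness of all terms for $u\in\mathrm H^1(\S^d)$. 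Writing $\mu=\nrmSd Vq$, this reduces the claim to the lower estimate $\inf\big\{\nrmSd{\nabla u}2^2-\mu\,\nrmSd up^2:\nrmSd u2=1\big\}\ge-\,\underline\lambda(\mu)$.

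Next I would read this infimum off the interpolation inequality. For any admissible $\lambda$, inequality~\eqref{Ineq:GNSgen1} holds a fortiori with the explicit constant $\underline\mu(\lambda)\le\mu(\lambda)$ guaranteed by Corollary~\ref{Cor:Lambda-Mu}. Dividing by the coefficient of the gradient term and restricting to $\nrmSd u2=1$, it rearranges into
\[
\nrmSd{\nabla u}2^2-\mu\,\nrmSd up^2\ge\big(c\,\underline\mu(\lambda)-\mu\big)\,\nrmSd up^2-c\,\lambda\,,\qquad c=\frac d{p-2}\,.
\]
Choosing $\lambda$ so that $c\,\underline\mu(\lambda)=\mu$ annihilates the $\nrmSd up^2$ term and leaves the bound $-\,c\,\lambda$, which is exactly $-\,\underline\lambda(\mu)$ once $\underline\lambda$ is read as the inverse of the increasing map $\lambda\mapsto\underline\mu(\lambda)$ in the normalisation of~\eqref{overline_mu}, the affine factors $c=d/(p-2)$ being absorbed by the rescaling of the gradient term as in the duality of~\cite{DolEsLa-APDE2014}. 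Equivalently, the scalar minimisation~\eqref{overline_mu} defining $\underline\mu$ is precisely the Young--Fenchel dual of this reduced one--parameter problem, so the two formulations yield the same value. Finally, replacing the sharp but implicit optimal constant $\mu(\cdot)$ by $\underline\mu\le\mu(\cdot)$ only weakens the conclusion in the admissible direction: since both functions are increasing, their inverses are oppositely ordered, so $\underline\lambda$ lies \emph{above} the sharp inverse and $-\,\underline\lambda(\mu)$ is a valid, though possibly non--optimal, lower bound for $\lambda_1(-\Delta-V)$.

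The step I expect to require the most care is the inversion itself. I must verify that $\lambda\mapsto\underline\mu(\lambda)$ is continuous and strictly increasing on $[1,+\infty)$ with $\underline\mu(1)=1$, so that $\underline\lambda$ is well defined, and that the minimiser $t$ in~\eqref{overline_mu} stays in the admissible range $t\ge1$, i.e.\ that the balancing value of $\lambda$ is itself $\ge1$ and the Young inequality is saturated at an interior point rather than on the boundary $t=1$ (which corresponds to the constants). Both facts should follow from the convexity of $\varphi$ together with $\varphi(0)=0$ and $\varphi'(0)=1$, which make the function minimised in~\eqref{overline_mu} strictly convex and its value strictly increasing in $\lambda$; carefully tracking the normalisation constant $c$ through the rescaling, in accordance with the conventions of~\cite{DolEsLa-APDE2014}, is the only genuinely bookkeeping--heavy point.
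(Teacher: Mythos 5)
Your opening coincides with the paper's own proof: the variational characterization of $\lambda_1(-\Delta-V)$ together with H\"older's inequality $\iSd{V\,|u|^2}\le\nrmSd Vq\,\nrmSd up^2$ (with $q$ and $p/2$ conjugate) reduces the claim to $\nrmSd{\nabla u}2^2-\mu\,\nrmSd up^2\ge-\,\underline\lambda(\mu)\,\nrmSd u2^2$ with $\mu=\nrmSd Vq$. The paper then concludes in one line: take $\lambda=\underline\lambda(\mu)$, so that $\underline\mu(\lambda)=\mu$, and apply the interpolation inequality with constant $\underline\mu(\lambda)$; no factor $d/(p-2)$ ever enters.

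Your final step, however, has a genuine gap. Your balancing condition is $c\,\underline\mu(\lambda)=\mu$ with $c=d/(p-2)$, i.e.\ $\lambda=\underline\lambda(\mu/c)$, so what your computation actually proves is $\lambda_1(-\Delta-V)\ge-\,c\,\underline\lambda(\mu/c)$, not $\ge-\,\underline\lambda(\mu)$. The claim that the ``affine factors are absorbed by the rescaling'' is exactly the identity $c\,\underline\lambda(\mu/c)=\underline\lambda(\mu)$, equivalently $\underline\mu(c\,\lambda)=c\,\underline\mu(\lambda)$, i.e.\ positive homogeneity of degree one of $\underline\mu$. This is false: $\underline\mu$ is a minimum of functions affine in $\lambda$, hence concave, and it is not linear (see the explicit $\underline\mu(\lambda)=\big(\lambda+\frac{p-2}\gamma\,(\lambda-1)\big)^{\gamma/(\gamma+p-2)}$ of Theorem~\ref{Thm:Lambda-Mu}). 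In effect you use $\underline\mu$ in the normalization of~\eqref{Ineq:GNSgen1} (via Corollary~\ref{Cor:Lambda-Mu}) in the middle of the argument, and then read $\underline\lambda=\underline\mu^{-1}$ in the unit-coefficient normalization at the end; the two are incompatible, and this is not bookkeeping but the crux. Two repairs: (a) drop the rescaling altogether, as the paper does: whenever $p-2\le d$ one has $\nrmSd{\nabla u}2^2+\lambda\,\nrmSd u2^2\ge\frac{p-2}d\,\nrmSd{\nabla u}2^2+\lambda\,\nrmSd u2^2\ge\underline\mu(\lambda)\,\nrmSd up^2$, and the choice $\lambda=\underline\lambda(\mu)$ gives the claim at once; or (b) keep your route but replace the false identity by the one-sided inequality $c\,\underline\lambda(\mu/c)\le\underline\lambda(\mu)$, valid for $c\ge1$ because $\underline\lambda$ is convex increasing with $\underline\lambda(0)=0$ (inverse of the concave $\underline\mu$, extended by $\underline\mu(\lambda)=\lambda$ on $(0,1]$), so that your bound is in fact stronger than the stated one and implies it. Note also that your balancing $\lambda$ lies in the range covered by Corollary~\ref{Cor:Lambda-Mu} only when $\mu\ge c$ (for $\mu<c$ one needs the trivial regime $\underline\mu(\lambda)=\lambda$, $\lambda\le1$), and that for $p>d+2$ (possible when $d\le3$) one has $c<1$, where neither (a) nor (b) applies; this residual case reflects a normalization ambiguity in~\eqref{overline_mu} which the paper's own one-line proof also glosses over.
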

%---------------------------------------------------------------------
\begin{proof} From H\"older's inequality $\iSd{V\,u^2}\le\mu\,\nrmSd up^2$ with $\mu=\nrmSd Vq$, we learn that
\[
\frac{\iSd{\(|\nabla u|^2-V\,u^2\)}}{\nrmSd u2^2}\ge\frac{\nrmSd{\nabla u}2^2-\mu\,\nrmSd up^2}{\nrmSd u2^2}
\ge-\,\underline\lambda(\mu)\,.
\]
\end{proof}
Corollary~\ref{Cor:KLT1} applies to $\varphi$ defined by~\eqref{optimalphi} for any $p\in(2,2^*)$ and to $\varphi$ defined by~\eqref{phifunction} for any $p\in(2,2^\#)$. In that case, the result holds with
\[
\underline\lambda(\mu)=\mu\quad\mbox{if}\quad\mu\in(0,1]\quad\mbox{and}\quad\underline\lambda(\mu)=\frac{p-2+\gamma\,\mu^{1+\frac{p-2}\gamma}}{p-2+\gamma}\quad\mbox{if}\quad\mu>1\,.
\]
Even more interesting is the fact that a result can also be deduced from Theorem~\ref{Thm:Lambda-Mu} in the range $p\in[1,2)$, $p\neq p_*(d)$, for which no explicit estimate was known so far. In that case, let us define
\[
\underline\lambda(\mu)=\mu\quad\mbox{if}\quad\mu\in(0,1]\quad\mbox{and}\quad\underline\lambda(\mu)=\frac{2-p-\gamma\,\mu^{1-\frac{2-p}\gamma}}{2-p-\gamma}\quad\mbox{if}\quad\mu>1\,.
\] \nc
%---------------------------------------------------------------------
\begin{corollary}\label{Cor:KLT2} Let $d\ge1$, $q>1$, $p=2\,q/(q+1)$ and assume that $V$ be a positive potential such that $V^{-1}\in \mathrm L^q(\S^d)$. Then
\[
\lambda_1(-\Delta-V)\ge \underline\lambda\(\nrmSd Vq\)\,.
\]
\end{corollary}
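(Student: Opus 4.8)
The plan is to follow the template of the proof of Corollary~\ref{Cor:KLT1}, with the two modifications forced by the regime $p\in(1,2)$: the H\"older estimate used there is replaced by its dual (reverse) form, and the explicit lower bound for $\mu(\lambda)$ is replaced by the lower bound for $\lambda(\mu)$ furnished by Theorem~\ref{Thm:Lambda-Mu}~(i). I would start from the variational characterization of the bottom of the spectrum of the operator dual to the one in Corollary~\ref{Cor:KLT1},
\[
\lambda_1=\inf\left\{\frac{\iSd{\(|\nabla u|^2+V\,u^2\)}}{\nrmSd u2^2}\,:\,u\in\mathrm H^1(\S^d)\setminus\{0\}\right\}\,,
\]
so that everything reduces to bounding the numerator from below by a multiple of $\nrmSd u2^2$. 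The Dirichlet term is handled by~\eqref{Ineq:GNSgen2}; the real task is to bound the potential term $\iSd{V\,u^2}$ \emph{from below} by a multiple of $\nrmSd up^2$, and this is exactly where the assumption $V^{-1}\in\mathrm L^q(\S^d)$ (the dual of $V\in\mathrm L^q$ used in Corollary~\ref{Cor:KLT1}) enters.

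The key step is a reverse H\"older inequality. Writing $|u|^p=\(V\,u^2\)^{p/2}\,\(V^{-1}\)^{p/2}$ and applying H\"older with the conjugate exponents $2/p$ and $2/(2-p)$, I would get
\[
\nrmSd up^p\le\(\kern2pt\iSd{V\,u^2}\)^{p/2}\(\kern2pt\iSd{\(V^{-1}\)^{\frac p{2-p}}}\)^{\frac{2-p}2}\,.
\]
The choice $p=2\,q/(q+1)$ is precisely what makes $p/(2-p)=q$, so that the second factor equals $\nrmSd{V^{-1}}q^{\,q\,(2-p)/2}$; since $q\,(2-p)=p$, raising the whole inequality to the power $2/p$ yields
\[
\iSd{V\,u^2}\ge\frac1{\nrmSd{V^{-1}}q}\,\nrmSd up^2\,,
\]
so that the coefficient in front of $\nrmSd up^2$ is the dual quantity $\mu:=\nrmSd{V^{-1}}q^{-1}$.

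It then remains to combine the two bounds. Since $\frac{2-p}d<1$ for every $p\in(1,2)$ and $d\ge1$, inserting the reverse H\"older estimate and~\eqref{Ineq:GNSgen2} gives
\[
\iSd{\(|\nabla u|^2+V\,u^2\)}\ge\frac{2-p}d\,\nrmSd{\nabla u}2^2+\mu\,\nrmSd up^2\ge\lambda(\mu)\,\nrmSd u2^2\ge\underline\lambda(\mu)\,\nrmSd u2^2\,,
\]
where the last inequality is Theorem~\ref{Thm:Lambda-Mu}~(i) for $\mu\ge1$, completed by $\lambda(\mu)=\mu=\underline\lambda(\mu)$ for $\mu\in(0,1]$ (no symmetry breaking). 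Dividing by $\nrmSd u2^2$ and taking the infimum over $u$ produces $\lambda_1\ge\underline\lambda(\mu)$, as claimed.

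The only genuinely new point compared with Corollary~\ref{Cor:KLT1} is the reverse H\"older inequality, and the sole thing to watch there is that the exponent closes exactly, which is guaranteed by the identity $q\,(2-p)=p$ equivalent to $p=2\,q/(q+1)$; everything else is bookkeeping and is made clean by the inequality $\frac{2-p}d<1$, which lets the full Dirichlet energy absorb the normalized one of~\eqref{Ineq:GNSgen2}. The excluded value $p=p_*(d)$, where Theorem~\ref{Thm:Lambda-Mu}~(i) degenerates, is recovered by passing to the limit exactly as in case~3) of its proof, the bound then reducing to $\lambda_1\ge1=\underline\lambda(\mu)$ for $\mu\ge1$.
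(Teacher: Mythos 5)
Your proposal is correct and follows essentially the same route as the paper: the reverse H\"older inequality with $\mu=\bignrmSd{V^{-1}}q^{-1}$ to bound $\iSd{V\,u^2}$ from below by $\mu\,\nrmSd up^2$, followed by~\eqref{Ineq:GNSgen2} and Theorem~\ref{Thm:Lambda-Mu}~(i). You merely make explicit what the paper leaves implicit (the exponent bookkeeping $p/(2-p)=q$, the absorption via $\frac{2-p}d\le1$, and the limiting case $p=p_*(d)$), which is fine.
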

%---------------------------------------------------------------------
\begin{proof} By the reverse H\"older inequality, with $\mu=\bignrmSd{V^{-1}}q^{-1}$ we have
\[
\iSd{\(|\nabla u|^2+V\,|u|^2\)}\ge\nrmSd{\nabla u}2^2+\mu\,\nrmSd up^2\,.
\]
The conclusion holds using~\eqref{Ineq:GNSgen2} and Theorem~\ref{Thm:Lambda-Mu}, (i) .\end{proof}

\medskip Let us conclude with a summary and some considerations on open problems. This paper is devoted to improvements of~\eqref{Ineq:GNS} and~\eqref{Ineq:logSob} by taking into account additional terms in the \emph{carr\'e du champ} method. The stereographic projection then induces improved weighted inequalities on the Euclidean space $\R^d$. Alternatively, various improvements have been obtained on $\R^d$ using the scaling invariance: see for instance~\cite{MR3493423} and references therein. It is to be expected that these two approaches are not unrelated as well as nonlinear diffusion flows on $\S^d$ and nonlinear diffusion flows on $\R^d$ can probably be related. The self-similar changes of variables based on the so-called Barenblatt solutions also points in this direction: see~\cite{DEL-JEPE}. Concerning \emph{stability} issues, we have been able to establish various estimates with explicit constants, which are all limited to the subcritical range $p<2^*$ when $d\ge3$. This is clearly not optimal (see~\cite{MR1124290,1504}). A last point deserves to be mentioned: improved entropy - entropy production estimates like $\mathsf i\ge\,d\,\varphi(\mathsf e)$ mean increased convergence rates in evolution problems like~\eqref{HeatFlow} or~\eqref{NLeqn}: how to connect an initial time layer with large entropy $\mathsf e$ to an asymptotic time layer with an improved spectral gap obtained, for instance, by \emph{best matching} (which amounts to impose additional orthogonality conditions for large time asymptotics), is a topic of active research.

%%%%%%%%%%%%%%%%%%%%%%%%%%%%%%%%%%%%%%%%%%%%%%%%%%%%%%%%%%%%%%%%%%%%%%%%%%
%%%%%%%%%%%%%%%%%%%%%%%%%%%%%%%%%%%%%%%%%%%%%%%%%%%%%%%%%%%%%%%%%%%%%%%%%%
\begin{center}
\rule{2cm}{0.5pt}
\end{center}
\vspace*{-2cm}

%%%%%%%%%%%%%%%%%%%%%%%%%%%%%%%%%%%%%%%%%%%%%%%%%%%%%%%%%%%%%%%%%%%%%%%%%%
%%%%%%%%%%%%%%%%%%%%%%%%%%%%%%%%%%%%%%%%%%%%%%%%%%%%%%%%%%%%%%%%%%%%%%%%%%
\section*{\begin{center}Appendices\end{center}}
\appendix\section{Estimating the distance to the constants}\label{App:Distance}

In Section~\ref{Sec:Intro}, we claimed that the entropy
\[
u\mapsto\frac{\nrmSd up^2-\nrmSd u2^2}{p-2}
\]
is an estimate of the distance of the function $u$ to the constant functions. Let us give some details.

If $p\in[1,2)$ we know that
\[
\nrmSd u2^2-\nrmSd up^2\ge\frac{2-p}{2^{p-1}\,p^2}\,\nrmSd u2^{2\,(1-p)}\(\kern4pt\iSd{\left||u|^p-\overline u^p\right|^\frac2p}\)^p
\]
with $\overline u=\nrmSd up$, for any $u\in\mathrm L^p\cap\mathrm L^2(\S^d)$, by the \emph{generalized Csisz\'ar-Kullback-Pinsker inequality}: see \cite{MR1801751,Caceres-Carrillo-Dolbeault02} or \cite[Proposition~2.1]{doi:10.1142/S0218202518500574}, and references therein.

If $p>2$, let us define the constant
\[
c_q:=\inf_{t\in\R^+\setminus\{1\}}\frac{t^q-1-q\,(t-1)}{\nu_q(t-1)}\quad\mbox{with}\quad\nu_q(t)
=\left\{\begin{array}{ll}
|s|^2\quad&\mbox{if}\quad |s|\le1\\
|s|^q\quad&\mbox{if}\quad s>1
\end{array}\right.
\]
for any $q>1$. Let $q=p/2$ and use the above constant to get, with $t=u^2/\nrmSd u2^2$, the estimate
\[
\iSd{|u|^p}\ge\nrmSd u2^p\(1+c_{p/2}\iSd{\nu_{\!p/2}\(\frac{|u|^2}{\nrmSd u2^2}-1\)}\)
\]
and deduce that
\[
\nrmSd up^2-\nrmSd u2^2\ge\nrmSd u2^2\left[\(1+c_{p/2}\iSd{\nu_{\!p/2}\(\frac{|u|^2-\overline u^2}{\overline u^2}\)}\)^{2/p}-1\right]
\]
with $\overline u=\nrmSd u2$, for any $u\in\mathrm L^p\cap\mathrm L^2(\S^d)$. Although there is no good homogeneity property because of the definition of the function $\nu_{\!p/2}$, the right-hand side is clearly a measure of the distance of $u$ to the constant $\overline u$.

%%%%%%%%%%%%%%%%%%%%%%%%%%%%%%%%%%%%%%%%%%%%%%%%%%%%%%%%%%%%%%%%%%%%%%%%%%
%%%%%%%%%%%%%%%%%%%%%%%%%%%%%%%%%%%%%%%%%%%%%%%%%%%%%%%%%%%%%%%%%%%%%%%%%%
\section{Stereographic projection}\label{App:Stereographic}

Let $x\in\R^d$, $r=|x|$, $\omega=\frac x{|x|}$ and denote by $(\rho\,\omega,z)\in\R^d\times(-1,1)$ the cartesian coordinates on the unit sphere $\S^d\subset\R^{d+1}$ given by
\[
z=\frac{r^2-1}{r^2+1}=1-\frac 2{\cb x^2}\,,\quad\rho=\frac{2\,r}{\cb x^2}\,.
\]
Let $u$ be a function defined on $\S^d$ and consider its counterpart $v$ on $\R^d$ given by
\[
u(\rho\,\omega,z)=\(\frac{\cb x^2}2\)^\frac{d-2}2v(x)\quad\forall\,x\in\R^d\,.
\]
Recall that $\delta(p)=2\,d-p\,(d-2)$. For any $p\ge1$, we have
\[
\iSd{|u|^p}=\big|\S^d\big|^{-1}\,2^\frac{\delta(p)}2\iRd{\frac{|v|^p}{\cb x^{\delta(p)}}}
\]
and also
\[
\iSd{|\nabla u|^2}+\frac 14\,d\,(d-2)\iSd{|u|^2}=\big|\S^d\big|^{-1}\iRd{|\nabla v|^2}\,.
\]

%%%%%%%%%%%%%%%%%%%%%%%%%%%%%%%%%%%%%%%%%%%%%%%%%%%%%%%%%%%%%%%%%%%%%%%%%%
%%%%%%%%%%%%%%%%%%%%%%%%%%%%%%%%%%%%%%%%%%%%%%%%%%%%%%%%%%%%%%%%%%%%%%%%%%
\begin{acknowledgement}
This work has been partially supported by the Project EFI (J.D., ANR-17-CE40-0030) of the French National Research Agency (ANR).\\[-2pt]
{\sl\scriptsize\copyright~2019 by the authors. This paper may be reproduced, in its entirety, for non-commercial purposes.}
\end{acknowledgement}

%%%%%%%%%%%%%%%%%%%%%%%%%%%%%%%%%%%%%%%%%%%%%%%%%%%%%%%%%%%%%%%%%%%%%%%%%%
%%%%%%%%%%%%%%%%%%%%%%%%%%%%%%%%%%%%%%%%%%%%%%%%%%%%%%%%%%%%%%%%%%%%%%%%%%
%\bibliographystyle{plainnat}

%\bibliographystyle{}

%\bibliographystyle{abbrvnat}\small\bibliography{DE2019}

%%%%%%%%%%%%%%%%%%%%%%%%%%%%%%%%%%%%%%%%%%%%%%%%%%%%%%%%%%%%%%%%%%%%%%%%%%
%%%%%%%%%%%%%%%%%%%%%%%%%%%%%%%%%%%%%%%%%%%%%%%%%%%%%%%%%%%%%%%%%%%%%%%%%%
\clearpage\section*{Figures}\label{Sec:Figures}

\begin{figure}[h]
\begin{center}\vspace*{-0.5cm}
\includegraphics{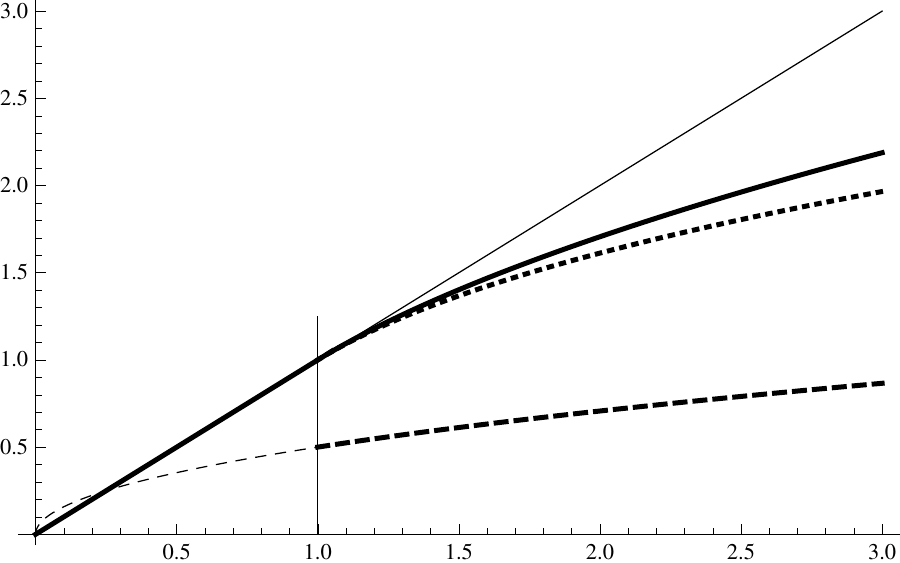}
\caption{\label{F1} The best constant $\lambda\mapsto\mu(\lambda)$ in Inequality~\eqref{Ineq:GNSgen1} for $d=3$ and $p=3$ is represented by the plain curve (numerical computation). The dashed line is the estimate of Proposition~\ref{PropDolEsLa-APDE2014} (valid only for $\lambda\ge1$) and the dotted line is the estimate of Theorem~\ref{Thm:Lambda-Mu}.}
\vspace*{1.5cm}
\includegraphics[width=3.5cm]{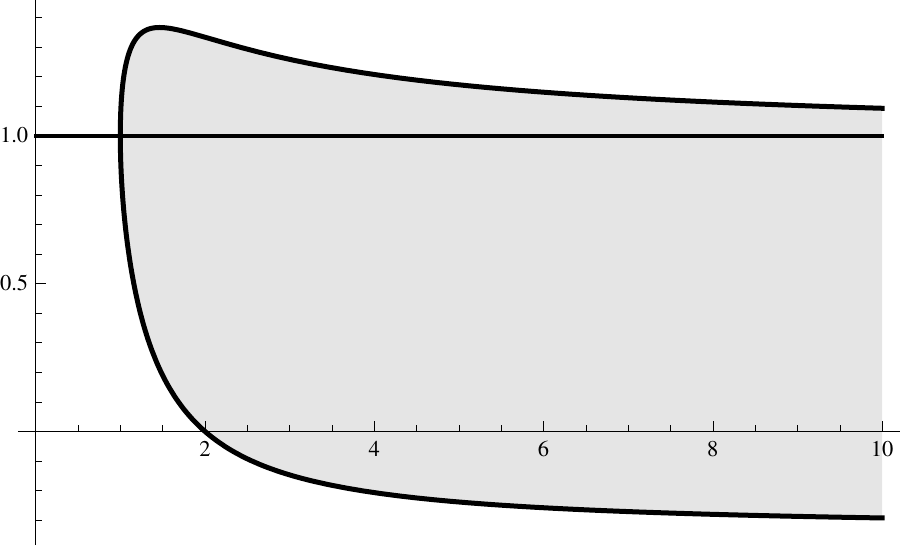}\hspace*{6pt}\includegraphics[width=3.5cm]{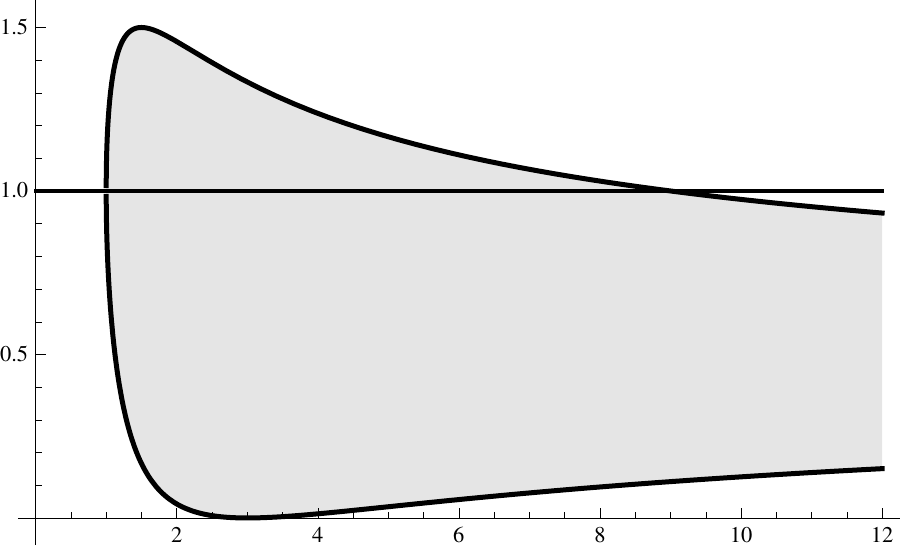}\hspace*{6pt}\includegraphics[width=3.5cm]{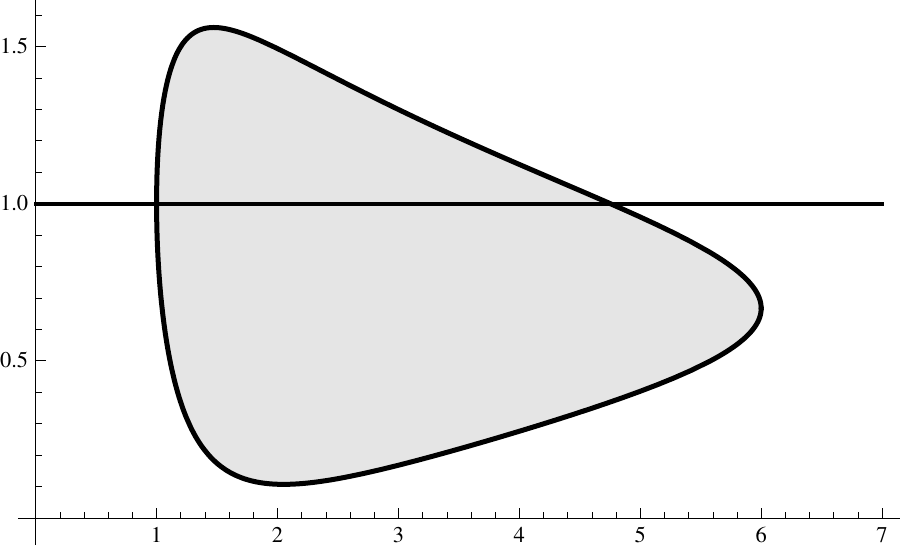}\\[8pt]\includegraphics[width=3.5cm]{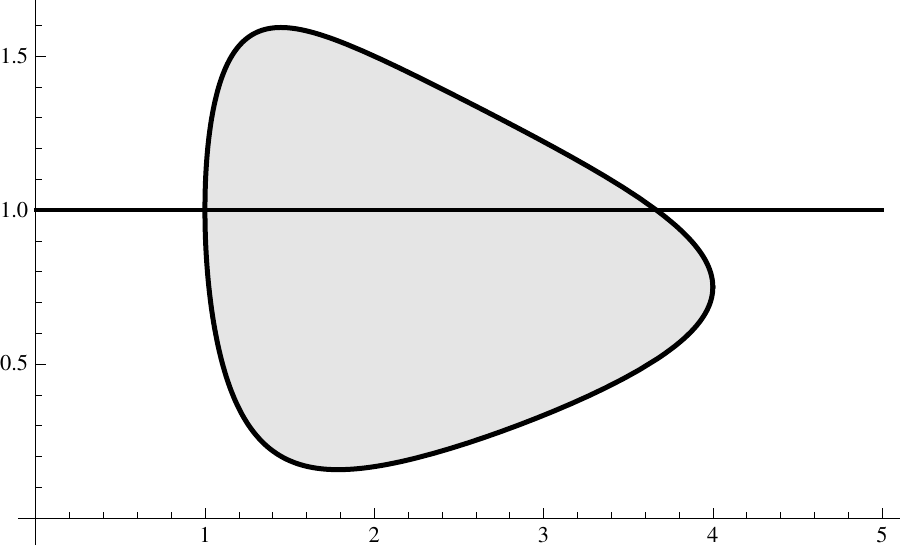}\hspace*{6pt}\includegraphics[width=3.5cm]{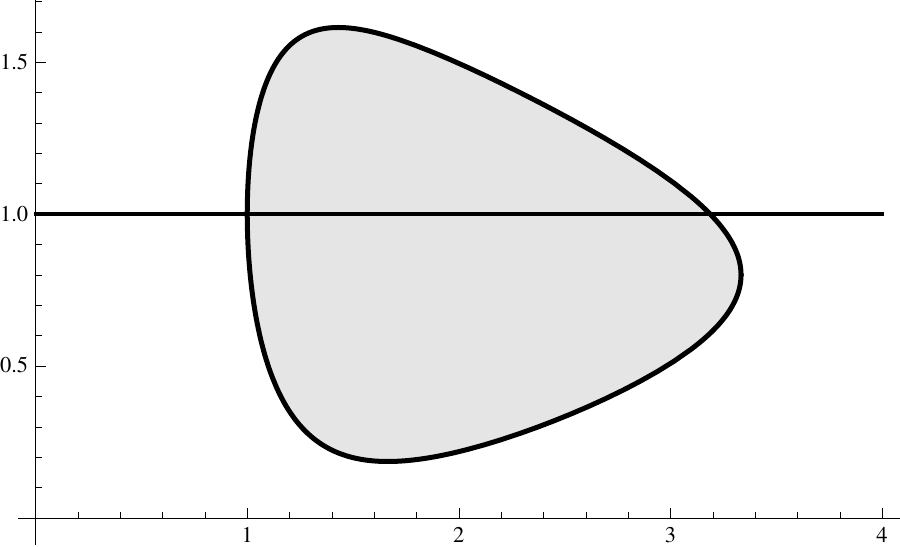}\hspace*{6pt}\includegraphics[width=3.5cm]{F2-5.pdf}
\caption{\label{F2} The admissible range for $d=1$, $2$, $3$ (first line), and $d=4$, $5$ and $10$ (from left to right), as it is deduced from Lemma~\ref{Lem:RangeBeta} using~\eqref{Eqn:m-beta}: the curves $p\mapsto m_\pm(p)$ enclose the admissible range of the exponent $m$.}
\end{center}\vspace*{-1cm}
\end{figure}

%%%%%%%%%%%%%%%%%%%%%%%%%%%%%%%%%%%%%%%%%%%%%%%%%%%%%%%%%%%%%%%%%%%%%%%%%%
%%%%%%%%%%%%%%%%%%%%%%%%%%%%%%%%%%%%%%%%%%%%%%%%%%%%%%%%%%%%%%%%%%%%%%%%%%
\end{document}